\documentclass[11pt]{amsart}
\usepackage{amsbsy,amssymb,amscd,amsfonts,latexsym,amstext,delarray, amsmath,graphicx,color,caption, amsthm, enumerate}
\usepackage{graphicx}
\usepackage{mathtools}
\usepackage{hyperref}
\usepackage{epsfig}
\input xy

\topmargin 0in
\headheight 0in
\headsep 0in
\textheight 8.3in
\textwidth 6.3in
\oddsidemargin 0in
\evensidemargin 0in
\headheight 24pt
\headsep 0.25in
\hoffset=0.3cm

\def\Z{{\mathbb Z}}
\def\R{{\mathbb R}}

\def\E{{\mathbb E}}

\hypersetup{
	colorlinks=true,
	linkcolor=blue,
	filecolor=blue,      
	urlcolor=blue,
	citecolor = blue
}

\begin{document}

\newtheorem{definition}{Definition}[section]
\newtheorem{example}{Example}[section]
\newtheorem{lemma}{Lemma}[section]
\newtheorem{thm}{Theorem}
\newtheorem{prop}[lemma]{Proposition}
\newtheorem{cor}[lemma]{Corollary}

\title[Sparse Domination Using Square Functions]{Sparse Domination for Bi-Parameter Operators using Square functions}
\author{Alexander Barron and Jill Pipher}

\maketitle

\newcommand{\Addresses}{{
		\bigskip
		\footnotesize

		\textsc{Department of Mathematics, Brown University,
			Providence, RI 02906, USA}\par\nopagebreak
		\textit{E-mail addresses}: \texttt{alexander\_barron@brown.edu,  jpipher@math.brown.edu}

	}}

\begin{abstract}
	Let $S$ be the dyadic bi-parameter square function $$Sf(x)^{2} = \sum_{R \in \mathcal{D}} |\langle f, h_{R} \rangle|^{2} \frac{1_{R}(x)}{|R|}.$$ We prove that if $T$ is a bi-parameter martingale transform and $f,g$ are suitable test functions, then there exists a sparse collection of rectangles $\mathcal{S}$ such that $$|\langle Tf, g \rangle| \lesssim \sum_{R \in \mathcal{S}} |R|(Sf)_{R}(Sg)_{R}.$$ We also extend this estimate to the case where $T$ is a bi-parameter cancellative dyadic shift and when $T$ is a paraproduct-free singular integral of Journ\'{e} type. Weighted estimates follow from the domination.  
\end{abstract}

\section{Introduction}

The theory of sparse domination is a recent addition to the classical theory of singular integral operators. One begins with an operator $T$, for example a Calder\'{o}n-Zygmund operator, and then shows that for suitable test functions $f$ the estimate 
\begin{equation} \label{classicSparse}
|Tf| \lesssim \sum_{Q \in \mathcal{S}} (|f|)_{Q}1_{Q}
\end{equation} holds in some sense. 
Here $(|f|)_{Q}$ denotes the average of $|f|$ over the cube $Q$, and
$\mathcal{S}$ is a \textit{sparse} collection of cubes in $\R^{n}$, meaning that there is some $\eta > 0$ such that for every $Q \in \mathcal{S}$ we can find $E_{Q} \subset Q$ with $|E_{Q}| > \eta |Q|$ and moreover $E_{Q} \cap E_{Q'} = \emptyset$ for $Q \neq Q'$.
The ``sense" in which this domination holds ranges from norm bounds, as in Lerner's original paper in the subject \cite{Lerner0}, 
to pointwise bounds.
Here the collection $\mathcal{S}$ depends on the function $f$, but the sparse bound can be used as an intermediate step to prove other estimates of interest. For example, it is straightforward to recover Hyt\"{o}nen's sharp $A_{2}$ bound \cite{Hy} for Calder\'{o}n-Zygmund operators by using \eqref{classicSparse}, and indeed sparse bounds yield sharp weighted estimates for a variety of operators.

In the case where $T$ is a Calder\'{o}n-Zygmund operator or a dyadic shift operator, pointwise bounds were proven in \cite{CondeRey} and \cite{DyCalc}, and later in and \cite{lacey1} and \cite{Lerner2}. One can also show that \eqref{classicSparse} holds in the \textit{sparse form} sense, meaning that for suitable test functions $f,g$ there exists a sparse collection $\mathcal{S}$ such that \begin{equation}\label{stdSparseForm}|\langle Tf, g \rangle| \lesssim \sum_{Q \in \mathcal{S}} |Q| (|f|)_{Q}(|g|)_{Q}.\end{equation} See \cite{roughSparse} for a proof.  While a pointwise estimate of the type \eqref{classicSparse} is stronger than the form bound \eqref{stdSparseForm},  in many applications \eqref{stdSparseForm} is sufficient, for example in proving sharp weighted estimates. Moreover, the sparse form technique has led 
to sparse bounds for several operators of interest that fall outside the scope of the classical Calder\'{o}n-Zygmund theory. Examples include rough singular integrals (linear \cite{roughSparse} and bilinear \cite{B}), bilinear Hilbert transforms \cite{MultiSparse}, discrete singular integrals \cite{KrauseLacey}, the spherical maximal function \cite{lacey2}, and various singular operators associated to semigroups \cite{BFP}.  

It is natural to ask whether or not there is some analogue of a sparse bound for bi-paramater singular integrals. Suppose, for example, that $$Tf(x_1,x_2) = \text{p.v.} \int_{\R^{2}} \frac{f(y_1, y_2)}{(x_1 - y_1) (x_2 - y_2)} dy_1 dy_2,$$ so that $T = H_1 \otimes H_2$ with $H_{1}$ a Hilbert transform in the $x_1$ direction and $H_2$ a Hilbert transform in the $x_2$ direction. If we fix the variable $x_1$ then we can apply the one-parameter estimate \eqref{classicSparse} to show that there is a sparse collection of intervals $\mathcal{S}_{x_1}$ such that $$|Tf(x_1, x_2)| \lesssim  \sum_{Q \in \mathcal{S}_{x_1}} (|H_{1} f(x_1, \cdot)|)_{Q}1_{Q}(x_2) $$ for almost every $x_2$. However, since the collection of intervals depends on $x_1$ there is no obvious way to iterate this estimate to get a sparse bound for the full operator $H_1 \otimes H_2$. We encounter a similar issue when trying to iterate the sparse form bound \eqref{stdSparseForm}. We therefore need to find a more direct approach that does not rely on the one-parameter results. It is clear that any analogue of the one-parameter sparse bound must involve collections of rectangles rather than cubes, due to the underlying geometry of bi-parameter singular integrals like $H_1 \otimes H_2$. At this point one encounters substantial difficulties adapting the one-parameter methods. For example, to prove \eqref{classicSparse} and \eqref{stdSparseForm} we construct the sparse collection of cubes using a stopping-time argument that is intimately related to the Calder\'{o}n-Zygmund decomposition and the Hardy-Littlewood maximal operator. In the bi-parameter setting the natural maximal operator to work with is the \textit{strong maximal function} $$M_{S}f(x) = \sup_{x \in R} \frac{1}{|R|} \int_{R} |f(y)| dy,$$ where the supremum is taken over rectangles containing $x$. However, this operator lacks the martingale structure that enables the type of stopping-time arguments used in the one-parameter setting. In the bi-parameter setting, for both singular integral theory and martingale theory, the square function is the most natural operator. For example, in \cite{Bernard} the square function was used to give an atomic decomposition for the Hardy space and prove $H^1 - BMO$ duality, later extended to the continuous setting of bi-parameter singular integrals, Hardy spaces, and product $BMO$ in \cite{ChF2}. 

In the bi-parameter setting, it is also not immediately clear what the proper definition of a `sparse collection of rectangles' should be. There are two likely candidates: 

\begin{definition}\label{sparseDef1} A collection $\mathcal{S}$ of rectangles in $\R^{n}$ is said to be \textit{sparse in the disjoint-pieces sense} if there is some $\eta> 0$ such that for all $R \in \mathcal{S}$ there is $E_R \subset R$ with $|E_{R}| > \eta |R|$, and such that if $R \neq R'$ then $E_{R} \cap E_{R'} = \emptyset$.  
\end{definition}

\begin{definition}\label{sparseDef2} A collection $\mathcal{S}$ of rectangles in $\R^{n}$ satisfies the \textit{Carleson packing condition} if there is some $\Lambda > 0$ such that for all open sets $U \subset \R^{n}$, $$\sum_{\substack{R \in \mathcal{S} \\ R \subset U}} |R| \leq \Lambda|U|.$$ \end{definition}

\noindent The structure of the {\it packing condition} in definition \ref{sparseDef2} is natural in light of the fact that the definition of bi-parameter $BMO$ requires 
a similar packing condition on Haar or wavelet coefficients relative to rectangles contained in open sets.
Both definitions are equivalent for collections of cubes \cite{DyCalc}, but as far as we know this equivalence is an open problem for rectangles. It is clear that the disjoint-pieces condition implies the Carleson packing condition, but we do not know if the reverse implication is true or false. 

The goal of this paper is to provide one approach to a sparse bound for certain bi-paramter operators, including the generalizations of $H_1 \otimes H_2$
beyond the tensor product, or even convolution, structure. Let $S$ be the dyadic bi-parameter square function given by $$Sf(x)^{2} = \sum_{R \in \mathcal{D}} |\langle f, h_{R} \rangle|^{2} \frac{1_{R}(x)}{|R|},$$ where $\mathcal{D} = \mathcal{D}_1 \times \mathcal{D}_2$ is the collection of dyadic rectangles in $\R^{2}$ (relative to two grids $\mathcal{D}_1$ and $\mathcal{D}_2$ in $\R$), and $h_{R}$ is the bi-parameter Haar function associated to $R$. Recall that if $R = I \times J$ then $$h_{R}(x_1,x_2) = h_{I}(x_1)h_{J}(x_2), \ \ \ \ \ \text{  } h_{I} = \frac{1}{|I|^{1/2}}(1_{I_{l}} - 1_{I_{r}}), $$ where $I_{l}$ and $I_{r}$ are the left and right children of $I$. Also recall that the functions $h_{R}$ form a basis of $L^{2}(\R)$ for any $\mathcal{D}$. Given the role of square functions in the bi-parameter theory, it is natural to attempt to prove a sparse form bound of the type \begin{equation} \label{squareSparse} |\langle Tf, g \rangle| \lesssim \sum_{R \in \mathcal{S}} |R|(Sf)_{R}(Sg)_{R},\end{equation} where $\mathcal{S}$ is a collection of rectangles that satisfies either the disjoint-pieces or Carleson packing condition. 

We begin by studying the \textit{bi-paramater martingale transform} $$Tf = \sum_{R \in \mathcal{D}}\epsilon_R\langle f, h_R \rangle h_R, \ \ \ \ \ \sup_{R}|\epsilon_{R}| \leq C$$ and prove that in this case the square-function sparse bound \eqref{squareSparse} holds.

\begin{thm}\label{mainThm} Let $T$ be the bi-parameter martingale transform defined above, and suppose $f$ and $g$ are functions with finitely many Haar coefficients. Then there exists a collection of rectangles $\mathcal{S}$ that is sparse in the disjoint-pieces sense (Definition \ref{sparseDef1}) such that 
	\begin{equation}\label{SSparse}
	|\langle Tf, g \rangle| \lesssim (\sup_{R} |\epsilon_R|) \sum_{R \in \mathcal{S}} |R|(Sf)_{R}(Sg)_{R}.
	\end{equation}
	The implicit constant does not depend on $f$ or $g$. \end{thm}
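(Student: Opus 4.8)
The plan is to reduce the estimate to a statement about Haar coefficients and then construct $\mathcal{S}$ by a stopping-time argument run on the square functions $Sf$ and $Sg$ themselves, rather than on $f$ and $g$. First, by orthonormality of the bi-parameter Haar system, $\langle Tf,g\rangle=\sum_{R\in\mathcal{D}}\epsilon_R\langle f,h_R\rangle\langle g,h_R\rangle$, so that
$$|\langle Tf,g\rangle|\le\Bigl(\sup_R|\epsilon_R|\Bigr)\sum_{R\in\mathcal{D}}|\langle f,h_R\rangle|\,|\langle g,h_R\rangle|,$$
and it suffices to find a collection $\mathcal{S}$, sparse in the disjoint-pieces sense, with $\sum_{R\in\mathcal{D}}|\langle f,h_R\rangle|\,|\langle g,h_R\rangle|\lesssim\sum_{R\in\mathcal{S}}|R|(Sf)_R(Sg)_R$. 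The elementary input is the pointwise inequality $|\langle f,h_R\rangle|\,|R|^{-1/2}\le\inf_{x\in R}Sf(x)\le(Sf)_R$ (and likewise for $g$), which already gives $|\langle f,h_R\rangle|\,|\langle g,h_R\rangle|\le|R|(Sf)_R(Sg)_R$ for each individual $R$; the actual content of the theorem is to compress the sum over all of $\mathcal{D}$ onto a sparse subcollection without a loss depending on $f$ or $g$.

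To build $\mathcal{S}$: since $f,g$ have finitely many Haar coefficients, fix a large dyadic rectangle $R_0$ containing all of them and put $R_0\in\mathcal{S}$. Given a selected $P\in\mathcal{S}$, declare its children to be a suitable collection of dyadic subrectangles $P'\subsetneq P$ that are maximal among those for which $(Sf)_{P'}>C(Sf)_P$, or $(Sg)_{P'}>C(Sg)_P$, or $P'$ meets $\{x:Sf(x)>C(Sf)_P\}\cup\{x:Sg(x)>C(Sg)_P\}$, for a large absolute constant $C$; iterating produces $\mathcal{S}$. For sparsity one needs $\bigl|\bigcup\{P':P'\text{ a child of }P\}\bigr|\le\tfrac{1}{2}|P|$: although $S$ has no martingale structure, the quantities $\int_P Sf$ and $\int_P Sg$ are finite, so by Chebyshev the bad set $\{x\in P:Sf(x)>C(Sf)_P\}\cup\{x\in P:Sg(x)>C(Sg)_P\}$ has measure at most $\tfrac{2}{C}|P|$, and the two averaged conditions are handled in the same way; taking $E_P:=P\setminus\bigcup\{P':\text{child of }P\}$ then gives $|E_P|\ge\tfrac{1}{2}|P|$, and the $E_P$ are pairwise disjoint.

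Writing $\pi(R)$ for the smallest element of $\mathcal{S}$ containing $R$, the remaining ingredient is the local estimate $\sum_{\pi(R)=P}|\langle f,h_R\rangle|^2\lesssim|P|(Sf)_P^2$ and its analogue for $g$. This is the point at which running the stopping time on the square function pays off: on $P$ with the children removed one has $Sf\lesssim C(Sf)_P$ pointwise, and on a child $P'$ the contribution to $Sf$ of the scales strictly between $P$ and $P'$ is controlled by the value of $Sf$ on the non-selected dyadic parent of $P'$, again $\lesssim C(Sf)_P$, so integrating over $P$ gives the bound. Granting this, split the sum along $\pi$ and apply Cauchy--Schwarz inside each cluster:
$$\sum_{R\in\mathcal{D}}|\langle f,h_R\rangle|\,|\langle g,h_R\rangle|=\sum_{P\in\mathcal{S}}\sum_{\pi(R)=P}|\langle f,h_R\rangle|\,|\langle g,h_R\rangle|\le\sum_{P\in\mathcal{S}}\Bigl(\sum_{\pi(R)=P}|\langle f,h_R\rangle|^2\Bigr)^{1/2}\Bigl(\sum_{\pi(R)=P}|\langle g,h_R\rangle|^2\Bigr)^{1/2},$$
and the local estimate bounds the last expression by $\sum_{P\in\mathcal{S}}|P|(Sf)_P(Sg)_P$, which is what we want.

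I expect the main obstacle to be the stopping-time construction itself. In one variable one selects maximal dyadic cubes inside an open set and uses the Calder\'{o}n--Zygmund/maximal-function machinery; in two parameters there is no canonical family of maximal dyadic subrectangles of an open set, and the strong maximal function---the obvious substitute---lacks exactly the stopping structure one needs. The way around this is to push everything through the square function: its localization below a rectangle $R$ is, when restricted to $R$, the original square function shifted by an additive constant, and this is just enough structure to make both the Chebyshev measure estimate and the local $\ell^2$ estimate hold for the same family of children. Arranging the ``pointwise'' and ``averaged'' stopping conditions so that sparsity and the local estimate come out of a single selection is the delicate point, and is presumably where most of the work lies.
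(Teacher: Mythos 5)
Your reduction to $\sum_R |\langle f,h_R\rangle||\langle g,h_R\rangle|$, the pointwise bound $|\langle f,h_R\rangle|^2/|R|\le\inf_{x\in R}Sf(x)^2$, and the treatment of the top-level ``good'' rectangles all match the paper (this is the term $I$ there, handled exactly by integrating $|\langle f,h_R\rangle||\langle g,h_R\rangle|\frac{1_R}{|R|}$ over $R\cap\Omega_0^c$ and applying Cauchy--Schwarz). The gap is the stopping-time construction, which is the entire content of the theorem and which, as written, does not work. If the children of $P$ are the \emph{maximal} dyadic $P'\subsetneq P$ with $(Sf)_{P'}>C(Sf)_P$, their union is the superlevel set $\{M_S(Sf\,1_P)>C(Sf)_P\}$ of the strong maximal function, and $M_S$ is \emph{not} weak $(1,1)$; Chebyshev on $\int_P Sf$ controls the level set of $Sf$, not of its strong maximal function, so the bound $\le\frac{2}{C}|P|$ is unjustified. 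The condition ``$P'$ meets the bad set'' is worse: if the bad set is nonempty, the maximal proper dyadic subrectangles meeting it include rectangles of the form $I_l\times J$ of measure $\frac12|P|$, destroying sparsity outright. If instead one takes the children to be the maximal dyadic rectangles \emph{contained in} the open set $\Omega_P=\{Sf>C(Sf)_P\}\cup\{Sg>C(Sg)_P\}$ (so that the measure bound does follow from Chebyshev), then the local estimate $\sum_{\pi(R)=P}|\langle f,h_R\rangle|^2\lesssim|P|(Sf)_P^2$ fails: unlike in one parameter, a rectangle $R$ not contained in any maximal dyadic subrectangle of $\Omega_P$ can still have $|R\cap\Omega_P|\ge(1-\epsilon)|R|$ (long thin rectangles protruding slightly from $\Omega_P$ in one direction), and for such $R$ one cannot run the argument that integrates $|\langle f,h_R\rangle|^2\frac{1_R}{|R|}$ over $R\cap\Omega_P^c$. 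Your heuristic for the local estimate (``scales strictly between $P$ and $P'$ controlled by the non-selected dyadic parent of $P'$'') is one-parameter reasoning; a bi-parameter rectangle has two parents and the intermediate scales form a two-dimensional grid, so this step has no meaning as stated.

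The paper avoids the tree construction entirely. Rectangles are classified by the Chang--Fefferman \emph{measure} conditions $|R\cap\Omega_k|>\frac12|R|$ and $|R\cap\Omega_{k+1}|\le\frac12|R|$ relative to the exceptional sets $\Omega_k=\{Sf>2^k\alpha_f\}\cup\{Sg>2^k\alpha_g\}$ (whose measures \emph{are} controlled by Chebyshev applied to $Sf$, $Sg$ themselves), and the sparse family is then extracted from each class $\mathcal{F}_k$ by the C\'{o}rdoba--Fefferman selection algorithm, which enforces disjoint-pieces sparsity by fiat ($|R^*\cap\bigcup_{\text{previous}}R^*_j|<\beta|R^*|$). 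The two steps you are missing are then: (i) for a selected $R\in\mathcal{F}_k$, the bound $|\langle f,h_R\rangle||\langle g,h_R\rangle|\lesssim\int_{R\cap\Omega_{k+1}^c}Sf\,Sg$ combined with $Sf,Sg\lesssim 2^k\alpha$ on $\Omega_{k+1}^c$ and the lower bound $Sf\gtrsim 2^k\alpha$ on a quarter of $R$, which converts $2^k\alpha$ into the average $(Sf)_R$; and (ii) the absorption of the \emph{never-selected} rectangles, which uses that any unselected $R\in\mathcal{F}_l$ has $|R\cap\bigcup_{k\ge l}\bigcup_i R_i^{(k)}\cap\Omega_{l+1}^c|\ge(\beta-\frac12)|R|$, allowing their total contribution to be dominated by integrals of $Sf\,Sg$ over the selected rectangles. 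Step (ii) is precisely the substitute for the local cluster estimate you could not prove, and it has no counterpart in your argument.
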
  

The methods used to prove Theorem \ref{mainThm} generalize to the case where $T$ is a bi-parameter dyadic shift as long as we replace the dyadic square functions $S$ by certain shifted square functions $S^{i,j}$. Since the statement of this result is somewhat technical, we defer the detailed definitions until Section 3. As a consequence of Martikainen's representation theorem \cite{Mart}, we are then able to deduce a type of sparse bound for paraproduct-free bi-parameter singular integrals belonging to the Journ\'{e} class. See Corollary \ref{singIntSparse} for a precise statement of this result. 

The one-parameter theory indicates that we should be able to easily prove weighted estimates once we have established a sparse bound. This is still the case for the square-function sparse form estimate \eqref{squareSparse}, and we derive weighted corollaries of our main results in Sections 4 and 5.
It is also straightforward to track the dependence of the constants on the $A_{p}$ characteristic of the weight.
However, due to the addition of the square functions $S$ and some extra complications related to the strong maximal function, this approach does not give weighted estimates that are sharp in terms of the $A_{p}$ characteristic (see Section 4 for definitions). Nevertheless, our sparse bounds provide an alternative approach to proving $A_{p}$ estimates for bi-parameter martingale transforms and cancellative dyadic shifts (see \cite{HPW} for another recent method). 

\subsection{Remarks on Theorem \ref{mainThm}} \textbf{(1)} For simplicity the results and the proofs are stated for $\R \times \R$, but our methods all extend directly to the product space $\R^{n} \times \R^{m}$ once suitable modifications are made to the definition of the Haar functions. We also do not see any obstacles to carrying out the arguments below in the multi-parameter setting.

\textbf{(2)} The sparse bound \eqref{squareSparse} is true in the one-parameter setting when we are working with intervals (or cubes), but in a stronger sense. That is, \eqref{squareSparse} holds with localized square functions, so that $$|\langle Tf, g \rangle| \lesssim \sum_{I \in \mathcal{S}} |I|(S_{I}f)_{I} (S_{I}g)_{I}.$$ Here the square function $S_{I}$ only involves dyadic intervals $J$ contained in $I$ (see Theorem 15 in \cite{BB}). This localized square-function sparse bound \textit{cannot} hold in the bi-parameter setting, as observed by Lacey \cite{lacey3}. We outline the argument. Recall that $BMO_{rect}$ is the space of functions such that $$ \sup_{R_{0}} \frac{1}{|R_{0}|} \sum_{R \subset R_0} |\langle f, h_R \rangle|^{2} \leq C,$$ with the supremum taken over rectangles $R_{0}$, and $BMO_{product}$ is the space of functions such that $$ \sup_{\Omega} \frac{1}{|\Omega|} \sum_{R \subset \Omega }  |\langle f, h_R \rangle|^{2} \leq C, $$ with the supremum taken over open sets $\Omega$. There is a strict inclusion $BMO_{prod} \subset BMO_{rect}$, and in particular by using Carleson's classic counterexample we can show that for any $\epsilon > 0$ there exists $f$ such that $\|f\|_{BMO_{prod}} = 1$ but $\|f\|_{BMO_{rect}} < \epsilon$ (see Chapter 3 in \cite{MS} for the construction of this example). If we choose an open set $\Omega$ realizing the supremum for such an $f$ and assume that a localized version of \eqref{squareSparse} holds, then we would deduce \begin{align*}|\Omega| \lesssim |\langle f, f \rangle| &\lesssim \sum_{R \in \mathcal{S}} (S_{R}f)_{R} (S_{R}f)_{R}|R| \\ &\lesssim \sum_{\substack{R \in \mathcal{S} \\ R \subset \Omega }} \epsilon |R| \lesssim \epsilon |\Omega|,  \end{align*} a contradiction. 

\textbf{(3)} It is clear from our proofs of the weighted corollaries in Sections 4 and 5 that there are still significant obstacles to overcome if we wish to develop a sharp weighted theory for multi-parameter operators by using sparse domination (see, for example, the comments after the proof of Theorem \ref{martingaleTransf} and the appendix). A different notion of `sparse operator' in the multi-parameter setting may be needed, possibly one that allows us to circumvent the obstructions caused by the strong maximal function. If definitions \ref{sparseDef1} and \ref{sparseDef2} are not equivalent, it may be the case that such an operator involves collections satisfying the Carleson packing property rather than the disjoint-pieces property. 

\subsection{Notation} We write $(f)_{R}$ to denote the average $\frac{1}{|R|} \int_{R}f(y) dy$. If $w$ is a function we often write $w(R) = \int_{R} w(y) dy$. We also write $A \lesssim B$ if there is some constant $C > 0$ that only depends on the dimension or Lebesgue exponents such that $A \leq CB$. If $C$ also depends on some other parameter $\beta$, we write $A \lesssim_{\beta} B$. We also write $L^{p}(w)$ for the weighted Lebesgue space with measure $w(x)dx$. 
   
\subsection{Acknowledgments} Work leading to this paper began during the Spring 2017 semester program in Harmonic Analysis at MSRI in Berkeley, CA. The authors would like to thank the organizers. We also thank Michael Lacey and Yumeng Ou for helpful conversations, and Jos\'{e} Conde-Alonso for taking the time to read an early draft and making helpful observations.

\section{The Bi-Parameter Martingale Transform}

Fix two dyadic lattices $\mathcal{D}_1, \mathcal{D}_2$ in $\R$ and let $\mathcal{D} = \mathcal{D}_1 \times \mathcal{D}_2$ be the associated dyadic rectangles in $\R^2$. We prove the square-function sparse form bound claimed in Theorem \ref{mainThm} for the bi-parameter martingale transform \begin{equation}Tf = \label{idOp}\sum_{R \in \mathcal{D}}\epsilon_R\langle f, h_R \rangle h_R, \end{equation} where as above $\sup_{R} |\epsilon_{R}| \leq C$. The argument begins by decomposing the form $\langle Tf, g \rangle$ according to the Chang-Fefferman variant of the Calder\'{o}n-Zygmund decomposition from \cite{ChF1}. We then select a certain sparse collection of rectangles using the C\'{o}rdoba-Fefferman algorithm from \cite{CF}, and further decompose the operator in terms of these rectangles. The structure of the square function allows us to absorb the `error' terms (i.e., the rectangles not belonging to the sparse collection). 

There are a few similarities between our basic approach and the standard sparse domination scheme in the one-parameter setting. For example, the bi-parameter analogue of the Calder\'{o}n-Zygmund decomposition plays an important role in the first step. Additionally, we select the sparse collection of rectangles via a covering lemma that is equivalent to the boundedness of the strong maximal function; in the one-parameter setting, sparse cubes are typically chosen via a similar covering lemma associated to the Hardy-Littlewood maximal function.

\subsection{Proof of Theorem \ref{mainThm}} We fix two test functions $f,g$ on $\R^2$ with support in some large cube $Q_0$, and assume there are only finitely many dyadic rectangles $R$ with $\langle f, h_R \rangle$ or $\langle g, h_R \rangle$ nonzero. Let $\alpha_f = c\cdot (Sf)_{Q_0}$ and $\alpha_g = c\cdot (Sg)_{Q_0}$, where $c$ is some large constant. Define $$\Omega_0 =  \{x \in Q_0 : Sf(x) > \alpha_f \} \cup \{x \in Q_0 : Sg(x) > \alpha_g \},$$ and assume $c$ has been chosen so that $|\Omega_0| \leq \frac{1}{2}|Q_0|$. Let $\mathcal{R}_0$ be the collection of rectangles $R$ such that $|R \cap \Omega_0| < \frac{1}{2}|R|,$ and for positive integers $k$ define $$\Omega_{k} = \{x \in Q_0 : Sf(x) > 2^{k}\alpha_f \} \cup \{x \in Q_0 : Sg(x) > 2^{k}\alpha_g \}.$$ Also set $$\mathcal{F}_{k} = \{R: |R\cap \Omega_{k}| > \frac{1}{2}|R| \ \text{and} \ |R \cap \Omega_{k+1}| \leq \frac{1}{2}|R| \}.$$ We begin with the case where $\epsilon_{R} = 1$ for all $R$. We wish to estimate $$\sum_{R} \alpha_{R} = \sum_{R \in \mathcal{R}_{0}} \alpha_R + \sum_{k} \sum_{R \in \mathcal{F}_k } \alpha_{R}$$ by a sparse form (with square function averages). Observe that since $|\Omega_{k}| \rightarrow 0$ as $k \rightarrow \infty$ there are only finitely many $\mathcal{F}_{k}$ that contribute to the sum (recall that $f,g$ have only finitely many nonzero Haar coefficients). Therefore it suffices to fix a large $N$ and bound $$\sum_{R \in \mathcal{R}_{0}} \alpha_R + \sum_{k=0}^{N} \sum_{R \in \mathcal{F}_k } \alpha_{R} := I + II$$ by a sparse form, provided all constants are independent of $N$. Note that $I$ corresponds to the `good' piece in the Chang-Fefferman variant of the Calder\'{o}n-Zygmund decomposition, and $II$ corresponds to the `bad' piece. The estimate for $I$ is straightforward:

\begin{align} \nonumber \left|\sum_{R \in \mathcal{R}_0} \langle f, h_R \rangle \langle g, h_R \rangle \right| &\leq \sum_{R \in \mathcal{R}_0}\int_{R \cap \Omega_{0}^{c} }
|\langle f, h_R \rangle \langle g, h_R \rangle| \frac{\textbf{1}_{R\cap \Omega_0^{c}}(y)}{|R \cap \Omega_{0}^{c} |} \ dy  \\ \label{standardArg1} &\leq 2 \sum_{R \in \mathcal{R}_0} \int_{R \cap \Omega_{0}^{c} }
|\langle f, h_R \rangle \langle g, h_R \rangle| \frac{\textbf{1}_{R}(y)}{|R|} \ dy  \\ \nonumber &\lesssim |Q_0| (Sf)_{Q_0}(Sg)_{Q_0}.
\end{align} 

\noindent The last inequality follows from Cauchy-Schwarz and the definition of $\Omega_{0}$. To handle the remaining term $II$, we construct a sparse collection of rectangles using the C\'{o}rdoba-Fefferman selection algorithm from \cite{CF}, and decompose $II$ in terms of these rectangles. 

Fix $\beta \in (0,1)$ and begin at level $N$. Order the rectangles $\{R_i\}$ in $\mathcal{F}_N$ according to size (for example), and set  $R_{1}^{\ast} = R_1$. Proceeding inductively, choose those $R_{k}^{\ast}$ such that $$|R_{k}^{\ast} \cap \bigcup_{j < k} R_{j}^{\ast}| < \beta |R_{k}^{\ast}|,$$ and such that $R_{k}^{\ast}$ is minimal with this property relative to the initial order.  Relabel the collection $\{ R_{k}^{\ast} \}$ as $\{R_{k}^{(N)} \}$ (the rectangles in the collection at level $N$). Now suppose we have added rectangles to the collection up until level $l+1$. Let $\Lambda^{l+1}$ denote the union of all rectangles added to this point. Order the rectangles in $\mathcal{F}_{l}$ as before, and let $R_{1}^{(l)}$ be the first rectangle relative to this order such that $$|R_1^{(l)} \cap \Lambda^{l+1}| < \beta |R_{1}^{(l)}|.$$ Inductively, choose $R_{k}^{(l)}$ such that $$|R_{k}^{(l)} \cap (\bigcup_{j < k} R_{j}^{(l)} \cup \Lambda^{l+1})| < \beta |R_{k}^{(l)}|,$$ and such that $R_{k}^{(l)}$ is minimal with this property (relative to the initial order). The resulting collection $\{R^{(m)}_{j} \}_{m,j}$ is sparse in the disjoint-pieces sense, with sparse parameter $1 - \beta$. In particular, for $R = R^{(l)}_{k}$ we can choose $E_R = R \backslash (\bigcup_{j < k} R^{(l)}_{j} \cup \Lambda^{l+1}).$ By construction $|E_R| \geq (1-\beta) |R|$, and clearly $E_R \cap E_{R'} = \emptyset$ for distinct $R, R'.$

It remains to be shown that $$\bigg|\sum_{k=0}^{N}\sum_{R \in \mathcal{F}_k} \alpha_R \bigg| \lesssim \sum_{m,j}|R_{j}^{(m)}| (Sf)_{R_{j}^{(m)}}(Sg)_{R_{j}^{(m)}}.$$ Break up this sum as $$\sum_{k=0}^{N} \sum_{R = R^{(k)}_{i}} \alpha_R \ + \ \sum_{\text{rest}} \alpha_R := A + B.$$ To estimate $A$, first observe that if $\alpha_R = \alpha_{R^{(k)}_i}$ then 

\begin{align*}\alpha_R \ &= \  \int_{R_{i}^{(k)} \cap \Omega_{k+1}^{c} } \alpha_{R} \cdot  \frac{\textbf{1}_{R_{i}^{(k)} \cap \Omega_{k+1}^{c}}(y)}{|R_{i}^{(k)} \cap \Omega_{k+1}^{c} |} \ dy \\ &\leq 2\int_{R_{i}^{(k)} \cap \Omega_{k+1}^{c}} \alpha_R \frac{\textbf{1}_{R} (y)}{|R|} \ dy \\ &\leq 2 \int_{R_{i}^{(k)} \cap \Omega_{k+1}^{c}} Sf(x) Sg(x) dx. \end{align*} Now recall that $Sf \lesssim 2^{k}(Sf)_{Q_0}$ and $Sg \lesssim 2^{k}(Sg)_{Q_0}$ in $\Omega_{k+1}^{c}.$ Moreover, by construction we must have either $Sf \gtrsim 2^{k}(Sf)_{Q_0}$ or $Sg \gtrsim 2^{k}(Sg)_{Q_0}$  in more than a quarter of $R_{i}^{(k)}$. Without loss of generality suppose $Sf \gtrsim 2^{k}(Sf)_{Q_0}$. Then \begin{align}\nonumber \int_{R_{i}^{(k)} \cap \Omega_{k+1}^{c}} Sf(x) Sg(x) dx \ \nonumber&\lesssim \left(2^{k}(Sf)_{Q_0} \cdot |R_{i}^{(k)}|\right) \frac{1}{|R_{i}^{(k)}|} \int_{R_{i}^{(k)}}Sg(y)  \ dy \\ \label{stdArg}  &\lesssim |R_{i}^{(k)}| (Sf)_{R_{i}^{(k)}} (Sg)_{R_{i}^{(k)}}, \end{align} and we can ultimately conclude that \begin{equation} \label{Aest} A \lesssim \sum_{k}\sum_{i} |R_{i}^{(k)}| (Sf)_{R_{i}^{(k)}} (Sg)_{R_{i}^{(k)}}.\end{equation}

We now turn to the term $B$. Observe that a rectangle $R \in \mathcal{F}_l$ contributes to the sum $B$ if $R$ was never chosen in the C\'{o}rdoba-Fefferman selection process. It follows that for such an $R \in \mathcal{F}_{l}$ we must have $$| R \cap \bigcup_{k \geq l}\bigcup_{i} R_{i}^{(k)} \cap \Omega_{l+1}^{c}| \geq (\beta - 1/2)|R|,$$ provided we have chosen $\beta > \frac{1}{2}.$ This is because $$|R \cap \Omega_{l+1}^{c}| \geq \frac{1}{2}|R|$$ and $$| R \cap \bigcup_{k \geq l} \bigcup_{i} R_{i}^{(k)}| \geq \beta |R|. $$ We then have \begin{align}\nonumber B \ \  &\leq (\beta - 1/2)^{-1} \sum_{l} \sum_{\substack{R \in \mathcal{F}_{l} }} \alpha_R \frac{| R \cap \bigcup_{k \geq l} \bigcup_{i} R_{i}^{(k)} \cap \Omega_{l+1}^{c}| }{ |R| } \\ \label{Bargument}  &\leq (\beta - 1/2)^{-1} \sum_{l} \sum_{\substack{R \in \mathcal{F}_l }} \alpha_R \sum_{k \geq l} \sum_{i}  \frac{| R \cap R_{i}^{(k)} \cap \Omega_{k+1}^{c}| }{ |R| } \\ \nonumber &\leq (\beta - 1/2)^{-1} \sum_{k} \sum_{i} \int_{R_{i}^{(k)} \cap \Omega_{k+1}^{c}} \left( \sum_{\substack{R }} \alpha_R \frac{\textbf{1}_R}{|R|}  \right) \ dy. \end{align} We've used the fact that $\Omega_{l+1}^{c} \subset \Omega_{k+1}^{c}$ when $k \geq l$ in the second line. We can now finish the estimate by applying Cauchy-Schwarz and using the properties of the $\Omega_{k+1}^{c}$, as in \eqref{stdArg}. Hence $$B \lesssim \sum_{k}\sum_{i} |R_{i}^{(k)}| (Sf)_{R_{i}^{(k)}} (Sg)_{R_{i}^{(k)}}$$ as well, completing the proof of the case where $\epsilon_R = 1$ for all $R$.

For the general martingale transform, we simply remark that we have not used any cancellation in the above argument. Hence the argument is exactly the same if we replace $\alpha_R$ with $|\alpha_R|$, and as a consequence we may repeat the above argument with $(\sup_{R}|\epsilon_{R}|)|\alpha_R|$ in place of $\alpha_R$. The sparse bound for $\sum_{R} \epsilon_R \langle f, h_R\rangle h_R$ follows.

\section{Bi-Parameter Dyadic Shifts and Singular Integrals}

The argument from Section 2 generalizes to the case where $T$ is a cancellative bi-parameter dyadic shift if one replaces the usual square function by certain shifted variants. This ultimately leads to a type of sparse bound for paraproduct-free bi-parameter singular integrals via Martikainen's representation theorem \cite{Mart}.  

\subsection{Definitions.} We let $\mathcal{D}_1$ and $\mathcal{D}_2$ be two dyadic grids in $\R$ (not necessarily the standard grids), and let $\mathcal{D} = \mathcal{D}_1 \times \mathcal{D}_2$ be the collection of dyadic rectangles relative to these grids. If $I$ is a dyadic interval and $k \in \mathbb{N}$, we let $(I)_{k}$ denote the children of $I$ at level $k$, so that $J \in (I)_{k}$ if and only if $J \subset I$ and $|J| = 2^{-k}|I|$. We also denote the bi-parameter Haar wavelets by $h_R = h_{R_1} \otimes h_{R_2}$ for rectangles $R = R_1 \times R_2$, and use $\hat{f}(R)$ to denote the Haar coefficient of a function $f$. 

Given tuples of non-negative integers $i = (i_1, i_2)$ and $j = (j_1, j_2)$, we define the \textit{cancellative bi-parameter dyadic shift} of complexity $(i,j)$ by

 \begin{align}\label{biParamShift} T^{i,j}f(y) &= \sum_{\substack{ R_1 \in \mathcal{D}_1 \\ R_2 \in \mathcal{D}_2 }}\sum_{\substack{ P_1 \in (R_1)_{i_1} \\ P_2 \in (R_2)_{i_2} }} \sum_{\substack{ Q_1 \in (R_1)_{j_1} \\ Q_2 \in (R_2)_{j_2} }} a_{PQR} \cdot \hat{f}(P) h_{Q}(y) \end{align}
 
\noindent Here $P= P_1 \times P_2, Q= Q_1 \times Q_2$ and $R = R_1 \times R_2$ are dyadic rectangles, and $a_{PQR}$ is a constant satisfying the bound \begin{equation}\label{shiftCoefBound} |a_{PQR}| \leq \frac{\sqrt{|P_1||Q_1|} \sqrt{|P_2| |Q_2|}}{|R_1||R_2|} = 2^{-\frac{1}{2} (i_1 + j_1 + i_2 + j_2) }.\end{equation} We also define the \textit{dyadic shifted square function} adapted to the shift parameters $i,j$ by \begin{equation} \label{shiftSquare} (S^{i,j}f(y))^{2} = \sum_{\substack{ R_1 \in \mathcal{D}_1 \\ R_2 \in \mathcal{D}_2 }} \bigg(\sum_{\substack{ P_1 \in (R_1)_{i_1} \\ P_2 \in (R_2)_{i_2} }} |\hat{f}(P)| \bigg)^{2} \bigg(\sum_{\substack{ Q_1 \in (R_1)_{j_1} \\ Q_2 \in (R_2)_{j_2} }} \frac{1_{Q_1}}{|Q_1|} \otimes \frac{1_{Q_2}}{|Q_2|}(y) \bigg). \end{equation} This clearly depends on the choice of $\mathcal{D}$, but we omit this dependence from the notation since our bounds will be independent of $\mathcal{D}$. Also note that this definition is \textit{not} symmetric in $i,j$. The same is true for the bi-parameter shift of complexity $(i,j)$. (The definition \eqref{shiftSquare} is taken from the paper \cite{HPW} by Holmes, Petermichl, and Wick).

\subsection{The Sparse Bound} We will now adapt the argument from Section 2 to prove the following sparse bound. \begin{thm}\label{shiftSparse} Let $\mathcal{D}$ be an arbitrary system of dyadic rectangles. Let $T^{i,j}$ be the cancellative shift defined above (using rectangles from $\mathcal{D}$),  and fix test functions $f,g$. Then there exists a sparse collection $\mathcal{S}$ of $\mathcal{D}$-dyadic rectangles such that $$|\langle T^{i,j}f, g \rangle| \lesssim  2^{-(i_1 + i_2 + j_1 + j_2 )} \sum_{R \in \mathcal{S}} |R|(S^{i,j}f)_{R}(S^{j,i}g)_{R}.$$ The collection $\mathcal{S}$ depends on $f,g$ and $(i,j)$, but the implicit constant does not. \end{thm}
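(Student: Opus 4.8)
\textbf{Proof proposal for Theorem \ref{shiftSparse}.}

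The plan is to mirror the structure of the proof of Theorem \ref{mainThm} as closely as possible, paying attention to the two features that break the naive adaptation: the shift coefficients spread a single input coefficient $\hat f(P)$ onto many output rectangles $Q$ across a common ``ancestor'' $R$, and the square functions $S^{i,j}$, $S^{j,i}$ must be the \emph{shifted} ones so that the stopping-time sets and the final Cauchy--Schwarz step still close up. First I would expand the form as
\begin{equation}
\langle T^{i,j}f, g\rangle = \sum_{R_1,R_2}\ \sum_{\substack{P_1\in(R_1)_{i_1}\\ P_2\in(R_2)_{i_2}}}\ \sum_{\substack{Q_1\in(R_1)_{j_1}\\ Q_2\in(R_2)_{j_2}}} a_{PQR}\,\hat f(P)\,\hat g(Q),
\end{equation}
and, using the coefficient bound \eqref{shiftCoefBound}, estimate $|\langle T^{i,j}f,g\rangle|$ by $2^{-(i_1+i_2+j_1+j_2)}$ times $\sum_R \alpha_R$, where for each ``generation'' rectangle $R$ I set $\alpha_R = \big(\sum_{P\in(R)_i}|\hat f(P)|\big)\big(\sum_{Q\in(R)_j}|\hat g(Q)|\big)$. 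The point of this reorganization is that $\alpha_R$ is now exactly a product of the two quantities appearing in the definitions \eqref{shiftSquare} of $S^{i,j}f$ (over $R$, in terms of the $Q$-children at level $j$) and of $S^{j,i}g$ (over $R$, in terms of the $Q$-children at level $i$), so that a pointwise identity of the form $\sum_R \alpha_R \frac{1_R}{|R|}(y)$ controls, after Cauchy--Schwarz in $y\in R$, the product $S^{i,j}f \cdot S^{j,i}g$ — precisely the swap $(i,j)\leftrightarrow(j,i)$ in the statement.

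Next I would run the Chang--Fefferman/Calder\'on--Zygmund stopping-time decomposition and the C\'ordoba--Fefferman selection exactly as in Section 2, but with $Sf$, $Sg$ replaced by $S^{i,j}f$, $S^{j,i}g$: set $\alpha_f = c\,(S^{i,j}f)_{Q_0}$, $\alpha_g = c\,(S^{j,i}g)_{Q_0}$, define $\Omega_k$ via these shifted square functions, define $\mathcal F_k$ as the rectangles $R$ with $|R\cap\Omega_k|>\frac12|R|$ but $|R\cap\Omega_{k+1}|\le\frac12|R|$, and select a disjoint-pieces-sparse subcollection $\{R_j^{(m)}\}$ by the same greedy algorithm. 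I would then split $\sum_k\sum_{R\in\mathcal F_k}\alpha_R = I + A + B$ as before: $I$ (the $\mathcal R_0$ term) is bounded directly by $|Q_0|(S^{i,j}f)_{Q_0}(S^{j,i}g)_{Q_0}$ via Cauchy--Schwarz and the definition of $\Omega_0$; for $A$ (the selected rectangles) one uses that on $\Omega_{k+1}^c$ both shifted square functions are $\lesssim 2^k$ times their $Q_0$-averages while at least a quarter of $R_i^{(k)}$ has one of them $\gtrsim 2^k$ times its average, giving $\alpha_R \lesssim |R|\,(S^{i,j}f)_R (S^{j,i}g)_R$; and for $B$ (unselected rectangles) one uses $\beta>\frac12$ and the Carleson/overlap bookkeeping to transfer the sum onto the selected rectangles and again apply Cauchy--Schwarz. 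The arithmetic is identical to Section 2 once the bookkeeping below is in place, because — just as there — no cancellation of the coefficients is used, so replacing $\alpha_R$ by $|\alpha_R|$ throughout is harmless and the coefficient factor $2^{-(i_1+i_2+j_1+j_2)}$ simply rides along out front.

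The main obstacle, and the only genuinely new point, is verifying that $\alpha_R \frac{1_R}{|R|}(y)$ really is dominated pointwise by $S^{i,j}f(y)\cdot S^{j,i}g(y)$ — or more precisely that, after summing in $R$, the left side is controlled by the product of the two shifted square functions in a way that survives the stopping-time localization. This requires being careful that the ``base'' rectangle $R$ in the shift is the common ancestor at levels $(i,j)$ and $(j,i)$ simultaneously, so that the $Q$-children at level $j$ (which enter $S^{i,j}f$) and at level $i$ (which enter $S^{j,i}g$) are both subsets of $R$, hence $\tfrac{1_Q}{|Q|}\le \tfrac{1_R}{|R|}\cdot \tfrac{|R|}{|Q|}$ and the pieces of $S^{i,j}f$, $S^{j,i}g$ restricted to $Q\subset R$ compare correctly to the $R$-average of $\alpha$. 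A secondary technical point is that $\alpha_R$ is built from $\ell^1$ sums over the $i$- and $j$-children (matching \eqref{shiftSquare}, which squares an $\ell^1$ sum), not $\ell^2$ sums, so one should check that the Cauchy--Schwarz step in the estimates for $A$ and $B$ is applied to $\sum_R \alpha_R \frac{1_R}{|R|}$ as a single product and not to each Haar coefficient separately; this is exactly how \eqref{shiftSquare} is designed, and is the reason the definition is taken from \cite{HPW}. Once this comparison is established, the three estimates $I$, $A$, $B$ go through verbatim, and summing them with the prefactor $2^{-(i_1+i_2+j_1+j_2)}$ yields the claimed bound with $\mathcal S = \{R_j^{(m)}\}$.
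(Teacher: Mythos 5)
Your overall strategy is the paper's: reduce to $\sum_R \alpha_R$ with $\alpha_R = \bigl(\sum_{P\in R_{\vec i}}|\hat f(P)|\bigr)\bigl(\sum_{Q\in R_{\vec j}}|\hat g(Q)|\bigr)$, and rerun the Section 2 machinery with $S^{i,j}f$ and $S^{j,i}g$ driving the exceptional sets $\Omega_k$ and the C\'ordoba--Fefferman selection. However, the step you yourself single out as ``the main obstacle'' --- dominating $\sum_R \alpha_R \frac{1_R(y)}{|R|}$ by the product of the shifted square functions --- is left unproved, and it is exactly where the paper inserts its one new ingredient, Lemma \ref{indicatorEst}. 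Since $R$ is the disjoint union of its level-$\vec i$ children and also of its level-$\vec j$ children, one has the exact factorization
$$\frac{1_R(y)}{|R|} = 2^{-\frac12(i_1+i_2+j_1+j_2)}\,\frac{\bigl(\sum_{P\in R_{\vec i}} 1_{P_1}\otimes 1_{P_2}(y)\bigr)^{1/2}\bigl(\sum_{Q\in R_{\vec j}} 1_{Q_1}\otimes 1_{Q_2}(y)\bigr)^{1/2}}{(|P_1||P_2|)^{1/2}(|Q_1||Q_2|)^{1/2}},$$
after which Cauchy--Schwarz in $R$ pairs $\sum_P|\hat f(P)|$ with the normalized $Q$-indicators and $\sum_Q|\hat g(Q)|$ with the normalized $P$-indicators, yielding the bound $2^{-\frac12(i_1+i_2+j_1+j_2)}S^{i,j}f(y)\,S^{j,i}g(y)$. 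Your remark that $\tfrac{1_Q}{|Q|}\le \tfrac{1_R}{|R|}\cdot\tfrac{|R|}{|Q|}$ does not substitute for this: the point is not a crude comparison of indicators but the exact splitting of $1_R/|R|$ into a geometric mean whose two halves match the indicator parts of $S^{i,j}f$ and $S^{j,i}g$ respectively.

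Relatedly, your accounting of the prefactor is wrong in a self-cancelling way. The coefficient bound \eqref{shiftCoefBound} gives only $|a_{PQR}|\le 2^{-\frac12(i_1+i_2+j_1+j_2)}$, not the full power $2^{-(i_1+i_2+j_1+j_2)}$ you attribute to it; the remaining half power is precisely the gain supplied by Lemma \ref{indicatorEst} in the Cauchy--Schwarz step above. As written, you claim the full power at the first step and a gain-free comparison at the second, so the two errors offset and the final exponent happens to be correct, but neither intermediate assertion is right, and the one carrying the actual content is the unproved one. Once the factorization lemma is in place, the rest of your outline --- the good/bad splitting, the $A$ and $B$ estimates, and the observation that no cancellation in the coefficients is used --- does go through essentially verbatim as in Section 2.
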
 

\noindent Note that the order of $i,j$ is switched in the term containing $g$. From \cite{HPW} we know that $\|S^{i,j}f\|_{L^{p}(w)} \lesssim c_w 2^{\frac{1}{2}(i_1 + i_2 + j_1 + j_2 )} \|f\|_{L^{p}(w)}$, so we need the factor in front of the sparse form for applications to weighted estimates. In the last section we will show that in the case $p = 2$ we can at least take $c_w = [w]_{A^{2}}^{5}$. 

The proof of the theorem is similar to what we have seen above. Begin by assuming that $f,g$ are supported in some cube $L$. Let $\alpha_f = c\cdot (S^{i,j}f)_{L}$ and $\alpha_g = c\cdot (S^{j,i}g)_{L}$, where $c$ is some large constant. Define $$\Omega_0 =  \{x \in L : S^{i,j}f(x) > \alpha_f \} \cup \{x \in Q_0 : S^{j,i}g(x) > \alpha_g \}.$$ Notice that $$|\Omega_0| \leq \frac{1}{\alpha_f} \int_{L} S^{i,j}f + \frac{1}{\alpha_g} \int_{L} S^{j,i}g \leq \frac{2}{c} |L|,$$ so we can assume $c$ has been chosen independent of $(i,j)$ such that $|\Omega_0| \leq \frac{1}{2}|L|$. For positive integers $k$, also define $$\Omega_{k} = \{x \in L : S^{i,j}f(x) > 2^{k}\alpha_f \} \cup \{x \in L : S^{j,i}g(x) > 2^{k}\alpha_g \},$$ and let $\mathcal{R}_0$ be the collection of rectangles $R$ such that $|R \cap \Omega_0| < \frac{1}{2}|R|$. Finally, for $k \geq 0$ define $$\mathcal{F}_{k} = \{R: |R\cap \Omega_{k}| > \frac{1}{2}|R| \ \text{and} \ |R \cap \Omega_{k+1}| \leq \frac{1}{2}|R| \}.$$

We first estimate the `good' part of our form corresponding to the rectangles in $\mathcal{R}_0$. To further simplify the notation, if $R = R_1 \times R_2$ is a dyadic rectangle and $P= P_1 \times P_2$ is a dyadic rectangle contained in $R$, we write $P \in R_{\vec{i}}$ to mean $P_1 \in (R_1)_{i_1}$ and $P_2 \in (R_2)_{i_2}.$  

\begin{lemma} \label{indicatorEst} Let $R$ be a dyadic rectangle. Then $$\frac{1_R(y)}{|R|} =  2^{-\frac{1}{2}(i_1 + i_2 + j_1 + j_2 )} \frac{ \left( \sum_{P \in R_{\vec{i}}} 1_{P_1} \otimes 1_{P_2}(y) \right)^{1/2} \left(\sum_{Q \in R_{\vec{j}}} 1_{Q_1} \otimes 1_{Q_2}(y)\right)^{1/2} }{(|P_1||P_2|)^{1/2}(|Q_1||Q_2|)^{1/2}}.$$\end{lemma}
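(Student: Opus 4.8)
The plan is to verify the identity by a direct pointwise computation, since the lemma is really just a bookkeeping identity that rewrites the normalized indicator $1_R/|R|$ in a form compatible with the definitions of $T^{i,j}$ and $S^{i,j}$. First I would record the key structural fact: for a fixed dyadic rectangle $R = R_1 \times R_2$ and a fixed tuple $\vec i = (i_1,i_2)$, the collection $\{P = P_1 \times P_2 : P \in R_{\vec i}\}$ is a partition of $R$ into $2^{i_1+i_2}$ pairwise-disjoint congruent sub-rectangles, each of dimensions $2^{-i_1}|R_1| \times 2^{-i_2}|R_2|$. Consequently, for every $y$, the sum $\sum_{P \in R_{\vec i}} 1_{P_1}\otimes 1_{P_2}(y)$ picks out exactly the one sub-rectangle containing $y$ when $y \in R$, and vanishes when $y \notin R$; that is, $\sum_{P \in R_{\vec i}} 1_{P_1}\otimes 1_{P_2}(y) = 1_R(y)$. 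The same argument with $\vec j$ in place of $\vec i$ gives $\sum_{Q \in R_{\vec j}} 1_{Q_1}\otimes 1_{Q_2}(y) = 1_R(y)$.

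Next I would assemble the numerator on the right-hand side. Since $1_R$ is an indicator, $1_R^{1/2} = 1_R$, so the product of the two square roots is $1_R(y)^{1/2}\,1_R(y)^{1/2} = 1_R(y)$. For the denominator one only needs to observe that the quantities $|P_1|, |P_2|, |Q_1|, |Q_2|$ appearing there do not in fact depend on the particular $P \in R_{\vec i}$ or $Q \in R_{\vec j}$: one has $|P_1| = 2^{-i_1}|R_1|$, $|P_2| = 2^{-i_2}|R_2|$, $|Q_1| = 2^{-j_1}|R_1|$, $|Q_2| = 2^{-j_2}|R_2|$, hence $(|P_1||P_2|)^{1/2}(|Q_1||Q_2|)^{1/2} = 2^{-\frac12(i_1+i_2+j_1+j_2)}|R_1||R_2| = 2^{-\frac12(i_1+i_2+j_1+j_2)}|R|$.

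Combining these two computations, the right-hand side equals
\[
2^{-\frac12(i_1+i_2+j_1+j_2)} \cdot \frac{1_R(y)}{2^{-\frac12(i_1+i_2+j_1+j_2)}|R|} = \frac{1_R(y)}{|R|},
\]
which is the claimed identity.

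As for the main obstacle: there really is none of substance here — the statement is an elementary rewriting, and the proof is a one-line normalization check once the partition property of the level-$k$ dyadic children is noted. The only point requiring a modicum of care is the (slightly abusive) notation in the denominator, where $|P_1|,|P_2|,|Q_1|,|Q_2|$ should be read as the common side-length products of the rectangles being summed over; making this explicit is all that is needed for the argument to be fully rigorous.
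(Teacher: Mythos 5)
Your proof is correct and is essentially identical to the paper's own argument: both rest on the observation that the rectangles $P \in R_{\vec i}$ (resp.\ $Q \in R_{\vec j}$) partition $R$, so the sums of indicators each equal $1_R(y)$, together with the size relations $|R|^{1/2} = 2^{\frac{1}{2}(i_1+i_2)}(|P_1||P_2|)^{1/2}$ and $|R|^{1/2} = 2^{\frac{1}{2}(j_1+j_2)}(|Q_1||Q_2|)^{1/2}$. Your remark that the denominator's $|P_1|,|P_2|,|Q_1|,|Q_2|$ are independent of the particular $P$ and $Q$ is a fair clarification of the paper's notation, but the substance is the same.
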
 \begin{proof} This is a simple consequence of the fact that $R$ is a disjoint union of all rectangles $P$ such that $P \in R_{\vec{i}}$, and similarly $R$ is a disjoint union of all rectangles $Q$ such that $Q \in R_{\vec{j}}$. Since $|R|^{1/2} =2^{\frac{1}{2} (i_1 + i_2)} (|P_1||P_2|)^{1/2}$ and $|R|^{1/2} = 2^{\frac{1}{2} (j_1 + j_2)}  (|Q_1||Q_2|)^{1/2}$ the identity follows. \end{proof}

\noindent Now let $$\langle T^{i,j}f, g\rangle_{\text{good}} = \sum_{R \in \mathcal{R}_{0}} \sum_{\substack{P \in R_{\vec{i}} \\ Q\in R_{\vec{j}} }} a_{PQR}\hat{f}(P) \hat{g}(Q).$$ Arguing as in the beginning of the proof of Theorem \ref{mainThm} we find that \begin{align*} |\langle T^{i,j}f, g \rangle_{\text{good}}| &\leq  2^{-\frac{1}{2}(i_1 + i_2 + j_1 + j_2 )} \sum_{R \in \mathcal{R}_{0}} \sum_{\substack{P \in R_{\vec{i}} \\ Q\in R_{\vec{j}} }} |\hat{f}(P) \hat{g}(Q)| \\ &=  2^{-\frac{1}{2}(i_1 + i_2 + j_1 + j_2 )}\sum_{R \in \mathcal{R}_0}\int_{R \cap \Omega_{0}^{c}}\sum_{\substack{P \in R_{\vec{i}} \\ Q\in R_{\vec{j}} }} |\hat{f}(P) \hat{g}(Q)|\frac{1_{R \cap \Omega_{0}^{c} } (y)}{|R \cap \Omega_{0}^{c}|} dy \\ &\lesssim 2^{-\frac{1}{2}(i_1 + i_2 + j_1 + j_2 )}\int_{\Omega_{0}^{c}}\sum_{R \in \mathcal{R}_0}\sum_{P \in R_{\vec{i}} } |\hat{f}(P)| \sum_{Q\in R_{\vec{j}} } |\hat{g}(Q)|\frac{1_{R} (y)}{|R|} dy \\ &\lesssim 2^{-(i_1 + i_2 + j_1 + j_2 )}\int_{\Omega_{0}^{c}} S^{i,j}f(y) \cdot S^{j,i}g(y) dy. \end{align*} To get to the last line, we applied Lemma \ref{indicatorEst} and then used Cauchy-Schwarz and the definition of the shifted square functions. But the integral is over $\Omega_{0}^{c}$, so we can conclude that $$|\langle T^{i,j}f, g \rangle_{\text{good}}| \lesssim  2^{-(i_1 + i_2 + j_1 + j_2 )} |L|(S^{i,j}f)_{L}(S^{j,i}g)_{L}.$$

It remains to estimate $$\langle T^{i,j}f, g \rangle_{\text{bad}} = \langle T^{i,j}f, g \rangle - \langle T^{i,j}f, g \rangle_{\text{good}}.$$ As in Section 2, this is where the sparse collection enters into the picture. We will assume that $\hat{f}(R)$ and $\hat{g}(R)$ are nonzero for only finitely many $R$, and let $N$ denote the largest integer such that $\hat{f}(R)$ and $\hat{g}(R)$ are nonzero for some $R \in \mathcal{F}_N$. All bounds will be independent of $N$, so density arguments will allow us to extend the results to more general $f,g$. We construct the sparse collection using the C\'{o}rdoba-Fefferman selection algorithm as before. The only change is that our exceptional sets $\Omega_{k}$ now depend on the shifted square function $S^{i,j}$, but otherwise the construction proceeds in exactly the same way as in the case of the martingale transform. We omit the details since the argument would be a copy of what appears in Section 2. Let $\{R_{n}^{(k)} \}$ denote the resulting collection, with $R^{(k)}_{n} \in \mathcal{F}_{k}$. We can break up the `bad' part of the form as $$\langle T^{i,j}f, g \rangle_{\text{bad}}  = \sum_{R = R^{(k)}_{n}} \sum_{\substack{P \in R_{\vec{i}} \\ Q\in R_{\vec{j}} }} a_{PQR}\hat{f}(P) \hat{g}(Q) + \sum_{\text{rest}}a_{PQR}\hat{f}(P) \hat{g}(Q) := A + B. $$ 

\textbf{Estimating A.} Fix $R = R^{(k)}_{n}$ and observe that since $R \in \mathcal{F}_k$ we have \begin{align*}\bigg|\sum_{\substack{P \in R_{\vec{i}} \\ Q\in R_{\vec{j}} }} a_{PQR}\hat{f}(P) \hat{g}(Q) \bigg| &\leq 2^{-\frac{1}{2}(i_1 + i_2 + j_1 + j_2)} \int_{R \cap \Omega_{k+1}^{c}} \sum_{\substack{P \in R_{\vec{i}} \\ Q\in R_{\vec{j}} }} |\hat{f}(P) \hat{g}(Q)|  \frac{1_{R \cap \Omega_{k+1}^{c}}(y)}{|R \cap \Omega_{k+1}^{c}|} dy \\ &\lesssim 2^{-\frac{1}{2}(i_1 + i_2 + j_1 + j_2)}\int_{R \cap \Omega_{k+1}^{c}} \sum_{P \in R_{\vec{i}} } |\hat{f}(P)| \sum_{Q\in R_{\vec{j}} } |\hat{g}(Q)| \frac{1_{R}(y)}{|R|} dy \\ &\lesssim 2^{-(i_1 + i_2 + j_1 + j_2)}\int_{R \cap \Omega_{k+1}^{c}} S^{i,j}f(y)S^{j,i}g(y) dy, \end{align*} applying Lemma \ref{indicatorEst} as above to get to the last line. Now argue as in \eqref{stdArg} and sum over all $R^{(k)}_{n}$ to get $$|A| \lesssim 2^{-(i_1 + i_2 + j_1 + j_2)}\sum_{n,k}|R^{(k)}_{n}| (S^{i,j}f)_{R^{(k)}_{n}} (S^{j,i}g)_{R^{(k)}_{n}}.$$

\textbf{Estimating B.} The term $B$ involves a sum of $\sum_{\substack{P \in R_{\vec{i}} \\ Q\in R_{\vec{j}} }}a_{PQR}\hat{f}(P) \hat{g}(Q)$ over all $R$ not chosen in the C\'{o}rdoba-Fefferman selection process. For any such $R \in \mathcal{F}_{l}$ we must have $$| R \cap \bigcup_{k \geq l} \bigcup_{n} R_{n}^{(k)} \cap \Omega_{l+1}^{c}| \geq (\beta - 1/2)|R|$$ as in Section 2, provided we have chosen $\beta > \frac{1}{2}.$ 

The proof now proceeds as in \eqref{Bargument}. One repeats the argument given in \eqref{Bargument} and makes modifications similar to what we've seen the the proof of $A$ in order to insert the shifted operators $S^{i,j}f, S^{j,i}g$. It follows that $$|B| \lesssim_{\beta} 2^{-(i_1+i_2+j_1+j_2)} \sum_{n,k}|R^{(k)}_{n}| (S^{i,j}f)_{R^{(k)}_{n}} (S^{j,i}g)_{R^{(k)}_{n}},$$ completing the proof of Theorem \ref{shiftSparse}. 

\subsection{Bi-Parameter Singular Integrals}

We can now use Martikainen's representation theorem \cite{Mart} to show that if $T$ is a \textit{paraproduct-free} bi-parameter singular integral belonging to the Journ\'{e} class, then $T$ can be estimated by an average of sparse forms of the type appearing in Theorem \ref{shiftSparse}. Loosely speaking, $T$ is a \textit{bi-parameter Journ\'{e} operator} on $\R\times \R$ if it has a kernel $K(x_1, x_2, y_1, y_2)$ on $\R^{2} \times \R^{2}$ that satisfies analogues of the Calder\'{o}n-Zygmund kernel conditions in each variable separately, along with mixed H\"{o}lder and size conditions involving the two parameters. We send the reader to Martikainen's paper \cite{Mart} for the precise definition of a Journ\'{e} operator $T$. We say that such a $T$ is \textit{paraproduct-free} if the bi-parameter version $T(1) = T^{\ast}(1) = 0$ holds, meaning there are only cancellative shifts in the dyadic representation from \cite{Mart}. 

We briefly recall one more definition. Let $\mathcal{D}_0$ denote the standard dyadic intervals in $\R$, and for every $\eta = (\eta_j)_{j \in \Z} \in \{0,1\}^{\Z}$ define the shifted grid $$\mathcal{D}_{\eta} := \{I + \eta : I \in \mathcal{D}_0 \}, \ \ \ \text{  with  } I + \eta := I + \sum_{2^{-k} < |I|}2^{-k}\eta_k.$$ We assign $\{0,1\}^{\Z}$ the natural Bernoulli(1/2) product measure. This gives us a probability measure on the space of shifted grids, and hence a probability measure on the space of shifted dyadic rectangles $\mathcal{D}_{\eta} \times \mathcal{D}_{\eta'}$ (see \cite{Hy} or \cite{Mart} for more properties of these random grids). 

\begin{cor}\label{singIntSparse} Let $T$ be a paraproduct-free bi-parameter Journ\'{e} singular integral on $\R^{2}$ and suppose $f,g$ are test functions with finitely many (bi-parameter) Haar coefficients. For each pair of tuples of non-negative integers $i,j$ set $\tau_{i,j} = 2^{-(i_1+i_2+j_1+j_2)}.$ Also let $S_{\omega}^{i,j}$ be the shifted square function \eqref{shiftSquare} defined with respect to the random dyadic system $\mathcal{D}_{\omega} = \mathcal{D}_{\omega_1} \times \mathcal{D}_{\omega_2}$. Then there exists $\delta >0$ and sparse collections of rectangles $\Lambda_{i,j}$ (depending on $f,g$) such that $$|\langle Tf, g \rangle| \lesssim \E_{\omega_1} \E_{\omega_2} \sum_{i,j \geq 0} 2^{-\max(i_1,j_1)\delta /2} 2^{-\max(i_2,j_2)\delta /2} \tau_{i,j}  \sum_{R \in \Lambda_{i,j}} |R|(S_{\omega}^{i,j}f)_{R} (S_{\omega}^{j,i}g)_{R}.$$ 
\end{cor}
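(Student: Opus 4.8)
The plan is to invoke Martikainen's bi-parameter dyadic representation theorem \cite{Mart} to express $T$ as an average (over random dyadic grids $\omega_1, \omega_2$) of a rapidly convergent sum of cancellative bi-parameter dyadic shifts, and then apply Theorem \ref{shiftSparse} to each shift. Concretely, since $T$ is paraproduct-free, the representation reads
\begin{equation*}
\langle Tf, g \rangle = C \, \E_{\omega_1}\E_{\omega_2} \sum_{i,j \geq 0} 2^{-\max(i_1,j_1)\delta/2} 2^{-\max(i_2,j_2)\delta/2} \langle T_{\omega}^{i,j} f, g \rangle,
\end{equation*}
where each $T_{\omega}^{i,j}$ is a cancellative bi-parameter dyadic shift of complexity $(i,j)$ built from the random rectangles in $\mathcal{D}_\omega = \mathcal{D}_{\omega_1} \times \mathcal{D}_{\omega_2}$, with coefficients obeying the normalization \eqref{shiftCoefBound}, and $\delta > 0$ is the H\"{o}lder exponent from the Journ\'{e} kernel conditions. (The decay factors $2^{-\max(i_1,j_1)\delta/2}2^{-\max(i_2,j_2)\delta/2}$ come from the mixed size/smoothness estimates in each parameter; this is exactly the form in which Martikainen states the theorem.)

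First I would fix a realization of $\omega = (\omega_1,\omega_2)$ and a pair $(i,j)$, and apply Theorem \ref{shiftSparse} to the shift $T_{\omega}^{i,j}$ acting on the fixed test functions $f,g$: this produces a sparse collection $\Lambda_{i,j} = \Lambda_{i,j}(\omega, f, g)$ of $\mathcal{D}_\omega$-dyadic rectangles such that
\begin{equation*}
|\langle T_{\omega}^{i,j} f, g \rangle| \lesssim 2^{-(i_1+i_2+j_1+j_2)} \sum_{R \in \Lambda_{i,j}} |R| (S_\omega^{i,j} f)_R (S_\omega^{j,i} g)_R = \tau_{i,j} \sum_{R \in \Lambda_{i,j}} |R| (S_\omega^{i,j} f)_R (S_\omega^{j,i} g)_R,
\end{equation*}
with an implicit constant independent of $\omega$, $(i,j)$, $f$, and $g$ (this uniformity is precisely what Theorem \ref{shiftSparse} asserts, and it is the crucial point that makes the summation and expectation below legitimate). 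Then I would take absolute values inside the representation, apply the triangle inequality, insert this bound term by term, and pull the sum over $(i,j)$ and the expectations outside, arriving at
\begin{equation*}
|\langle Tf, g \rangle| \lesssim \E_{\omega_1}\E_{\omega_2} \sum_{i,j \geq 0} 2^{-\max(i_1,j_1)\delta/2} 2^{-\max(i_2,j_2)\delta/2} \tau_{i,j} \sum_{R \in \Lambda_{i,j}} |R| (S_\omega^{i,j} f)_R (S_\omega^{j,i} g)_R,
\end{equation*}
which is exactly the claimed estimate.

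The remaining points are bookkeeping: one must check that the outer sum over $(i,j)$ converges so that the interchange of sum and expectation is justified — but this is immediate from the decay factors $2^{-\max(i_1,j_1)\delta/2}2^{-\max(i_2,j_2)\delta/2}$ together with $\tau_{i,j} = 2^{-(i_1+i_2+j_1+j_2)}$, since for fixed $f,g$ with finitely many Haar coefficients each inner sparse form is bounded by a fixed constant times $\|f\|_{L^2}\|g\|_{L^2}$ (the sparse collection is finite and $(S_\omega^{i,j}f)_R$ is controlled); and one must note that the random grids $\mathcal{D}_{\omega_1} \times \mathcal{D}_{\omega_2}$ and the associated shifted square functions $S_\omega^{i,j}$ are precisely the objects appearing in Theorem \ref{shiftSparse}, which was proved for an \emph{arbitrary} dyadic system. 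I expect the main (and essentially only) obstacle to be purely expository: correctly quoting Martikainen's representation theorem in the paraproduct-free case, verifying that the cancellative shifts it produces satisfy the normalization \eqref{shiftCoefBound} under which Theorem \ref{shiftSparse} was established, and confirming that the measurability/convergence issues in passing the expectation through the sum cause no trouble. No new analytic input beyond Theorem \ref{shiftSparse} and \cite{Mart} is needed.
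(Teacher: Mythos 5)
Your proposal is correct and follows exactly the paper's argument: quote Martikainen's representation of a paraproduct-free Journ\'{e} operator as an average over random grids of cancellative shifts with the decay factors $2^{-\max(i_1,j_1)\delta/2}2^{-\max(i_2,j_2)\delta/2}$, then apply Theorem \ref{shiftSparse} to each form $\langle T_\omega^{i,j}f,g\rangle$ term by term, using that the theorem holds uniformly over arbitrary dyadic systems. The additional remarks on convergence and uniformity are sound but not needed beyond what the paper records.
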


\begin{proof} Given a system of shifted dyadic rectangles $\mathcal{D}_{\omega} = \mathcal{D}_{\omega_1} \times \mathcal{D}_{\omega_2}$, we let $T^{i,j}_{\omega}$ denote the shift operator \eqref{biParamShift} defined with respect to rectangles from $\mathcal{D}_{\omega}$. Martikainen proved that if $T$ is a paraproduct-free operator in the Journ\'{e} class then $$ \langle Tf, g \rangle = \E_{\omega_1} \E_{\omega_2} \sum_{i,j \geq 0} 2^{-\max(i_1,j_1)\delta /2} 2^{-\max(i_2,j_2)\delta /2} \langle T_{\omega}^{i,j}f, g \rangle.$$ The claimed result now follows by applying Theorem \ref{shiftSparse} to each form $\langle T_{\omega}^{i,j}f, g \rangle$ (recall that Theorem \ref{shiftSparse} applies for any dyadic system $\mathcal{D}_{\omega_1} \times \mathcal{D}_{\omega_2}$). 
\end{proof}

It should be possible to extend the result of Corollary \ref{singIntSparse} to arbitrary Journ\'{e} operators $T$, although the weighted estimates that follow would be far from optimal. We briefly outline one approach. It would be sufficient to prove analogues of Theorem \ref{shiftSparse} for the various paraproducts that show up in the dyadic representation of $T$. To this end, one can work with the mixed operators $SM$ and $MS$, where $S$ and $M$ are one-parameter square and maximal functions in different directions. By combining methods from \cite{HPW} or \cite{MPTT} with our sparse domination scheme, it should be possible to prove bounds of the type \begin{equation}\label{paraproduct} |\langle \Pi f, g \rangle| \lesssim \sum_{R \in \mathcal{S}} |R|(SMf)_{R}(SMg)_{R},\end{equation} where $\Pi$ is a bi-parameter paraproduct. One may also have to work with shifted variants of the mixed operators (see \cite{HPW}), and prove analogues of \eqref{paraproduct} involving these operators.

\section{Weighted Estimates for Bi-Parameter Martingale Transform}  

Let $w(x_1,x_2)$ be a positive, locally integrable weight on $\R\times \R$. Recall that $w$ is a two-parameter $A_p(\R \times \R)$ weight for $1 < p < \infty$ if and only if \begin{equation} \label{weight} [w]_{A_p (\R\times \R)}:= \sup_{R} \left(\frac{1}{|R|}\int_{R} w(x_1,x_2) \ dx \right)\left(\frac{1}{|R|}\int_{R} w(x_1,x_2)^{1 - p'} \ dx \right)^{p-1} < \infty.\end{equation} By the Lebesgue differentiation theorem this condition is equivalent to  $w(\cdot, x_2) \in A_p(\R)$ uniformly in $x_2$ and $w(x_1, \cdot) \in A_{p}(\R)$ uniformly in $x_1$, and in fact $$[w]_{A_p(\R \times \R)} \backsimeq \max( \| [w(\cdot, x_2)]_{A_{p}(\R)} \|_{L^{\infty}_{x_2}}, \ \| [w(x_1, \cdot)]_{A_{p}(\R)} \|_{L^{\infty}_{x_1}}).$$ Write $[w]_{A_p} = [w]_{A_p (\R \times \R)}.$ As we noted in the introduction, in the one-parameter setting sparse bounds lead to weighted estimates that are sharp in terms of the $A_{p}$ characteristic. Here we use our sparse bound \eqref{squareSparse} to derive $A_{p}$ estimates in terms of the bi-parameter characteristic. Unfortunately, the square-function sparse bound we have proved does not seem to imply estimates that are sharp. By using known methods, for example the arguments in \cite{HPW}, one can prove $$\|Tf \|_{L^{p}(w)} \lesssim [w]_{A_{p}(\R \times \R)}^{8} \|f\|_{L^{p}(w)}$$ for a Journ\'{e}-type operator \cite{Ou}, whereas our methods yield a power that is much worse. On the other hand, our method of proof simplifies the somewhat technical arguments that currently exist in the literature (see, for example, the remark at the end of Section 5).   

\begin{lemma} \label{iteration} Let $S$ be the bi-parameter square function with respect to some fixed dyadic grid $\mathcal{D}$, and let $M$ be the strong maximal function. Then if $w \in A_2$, $$\|Sf\|_{L^{2}(w)} \lesssim [w]_{A_2}^{2} \|f\|_{L^{2}(w)}$$ and $$\|Mf\|_{L^{2}(w)} \lesssim [w]_{A_2}^{2} \|f\|_{L^{2}(w)}$$ for all $f \in L^{2}(w)$. \end{lemma}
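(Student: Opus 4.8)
The plan is to reduce the bi-parameter weighted bounds to known one-parameter sharp estimates by exploiting the tensor/iterated structure of both $S$ and $M$, using Fubini and the fact (recorded just above the lemma) that $[w]_{A_2(\R\times\R)}\simeq\max(\|[w(\cdot,x_2)]_{A_2(\R)}\|_{L^\infty_{x_2}},\|[w(x_1,\cdot)]_{A_2(\R)}\|_{L^\infty_{x_1}})$. Both $Sf$ and $Mf$ factor as a composition $S=S^{(1)}S^{(2)}$ (resp.\ $M=M^{(1)}M^{(2)}$) of one-parameter operators acting in the separate variables: writing $S^{(2)}$ for the dyadic square function in the second variable applied for each fixed $x_1$, one has $Sf(x_1,x_2)^2=\sum_{I\in\mathcal D_1}\big(S^{(2)}(\langle f,h_I\rangle_1)(x_2)\big)^2\,|I|^{-1}1_I(x_1)$, which is exactly $S^{(1)}$ applied in the first variable to the function $x_1\mapsto S^{(2)}(f(x_1,\cdot))(x_2)$; the analogous identity for $M$ is immediate from the definition of the strong maximal function as an iterated supremum.

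First I would fix the second variable $x_2$ and apply the sharp one-parameter weighted bound for the dyadic square function (Hukovic--Treil--Volberg / Wittwer): $\|S^{(2)}h\|_{L^2(v\,dx_2)}\lesssim[v]_{A_2(\R)}\|h\|_{L^2(v\,dx_2)}$, with $v=w(x_1,\cdot)$ for a.e.\ $x_1$, so that $[v]_{A_2(\R)}\le[w]_{A_2(\R\times\R)}$ uniformly. Integrating in $x_1$ against $dx_1$ gives $\|S^{(2)}f\|_{L^2(w)}\lesssim[w]_{A_2}\|f\|_{L^2(w)}$. Then I would fix $x_1$ and apply the same one-parameter estimate for $S^{(1)}$ in the first variable, again controlling the one-parameter characteristic by $[w]_{A_2}$, and integrate in $x_2$; composing the two inequalities and using $Sf=S^{(1)}\circ S^{(2)}f$ pointwise (in the sense of the identity above) yields $\|Sf\|_{L^2(w)}\lesssim[w]_{A_2}^2\|f\|_{L^2(w)}$. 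The argument for $M$ is the same, using the sharp one-parameter bound $\|M^{(k)}h\|_{L^2(v)}\lesssim[v]_{A_2(\R)}\|h\|_{L^2(v)}$ (Buckley) in each variable and the iterated-supremum factorization $Mf\le M^{(1)}M^{(2)}f$.

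The main obstacle is the composition step: the one-parameter square function bound in the second variable is applied to $f(x_1,\cdot)$ for each $x_1$, but to then run $S^{(1)}$ one must pass from the pointwise inequality for $S^{(2)}$ to an inequality for $S^{(1)}S^{(2)}f$, and this requires that $S^{(1)}$ be monotone/sublinear in an appropriate sense and that the relevant functions of $x_1$ be measurable and finite a.e.\ — so one should first observe (using the finiteness of the Haar expansions implicit in the test-function class, or a standard density/truncation reduction) that all quantities are finite a.e., justify Fubini, and use the pointwise monotonicity $|a(x_1)|\le b(x_1)$ a.e.\ $\Rightarrow S^{(1)}a(x_1,x_2)\le S^{(1)}b(x_1,x_2)$ to transfer the $x_2$-estimate inside. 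Once measurability and the iterated structure are set up, the two applications of the sharp one-parameter theorem multiply the characteristics, producing the stated $[w]_{A_2}^2$; no sharpness in the bi-parameter exponent is claimed or needed.
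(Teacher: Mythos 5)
Your proposal is correct and follows essentially the same route as the paper, which simply iterates the sharp one-parameter weighted bounds for the dyadic square function and the Hardy--Littlewood maximal function in each variable, using the pointwise bound $Mf\le M_1(M_2f)$ for the strong maximal function and the uniform control of the one-parameter $A_2$ characteristics by $[w]_{A_2(\R\times\R)}$. One small caveat: $S$ is not literally the composition $S^{(1)}\circ S^{(2)}$ applied to $f$ (the nonlinear operator $S^{(2)}$ does not commute with taking Haar coefficients in the first variable), but your displayed identity $Sf(x_1,x_2)^2=\sum_{I}\bigl(S^{(2)}(\langle f,h_I\rangle_1)(x_2)\bigr)^2|I|^{-1}1_I(x_1)$ is the correct one, and applying the one-parameter bound to each $\langle f,h_I\rangle_1$ for fixed $x_1$, summing in $I$, and then recognizing $S^{(1)}(f(\cdot,x_2))$ gives the stated $[w]_{A_2}^2$ without needing any monotonicity of $S^{(1)}$.
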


\begin{proof}  Recall that the dyadic one-parameter square function satisfies the weighted estimate $$\|S_{1}(f)\|_{L^{2}(w)} \lesssim [w]_{A_2(\R)}\|f \|_{L^{2}(w)}$$ and the Hardy-Littlewood maximal operator satisfies the estimate $$\|M_{1}(f)\|_{L^{2}(w)} \lesssim [w]_{A_2(\R)}\|f \|_{L^{2}(w)}.$$  Both of the claimed estimates follow by iterating the one-parameter results, using the pointwise bound $Mf \leq M_1 (M_2 f)$ for the strong maximal function (here $M_1$ is the Hardy-Littlewood operator in the direction $x_1$, and $M_2$ is the Hardy-Littlewood operator in the direction $x_2$).   \end{proof}

\begin{thm}\label{martingaleTransf} Let $\epsilon_R$ be a uniformly bounded sequence indexed over dyadic rectangles with $\sup_{R} |\epsilon_R| \leq C_{\epsilon}$, and let $Tf$ be the following bi-parameter martingale transform: $$Tf(x) = \sum_{R} \epsilon_R \langle f, h_R \rangle h_R (x).$$ Then for all $w \in A_2 = A_2(\R \times \R)$ and $f \in L^{2}(w)$ we have $$\|Tf\|_{L^{2}(w)} \lesssim C_{\epsilon}[w]_{A_2}^{8}\|f\|_{L^{2}(w)}.$$ \end{thm}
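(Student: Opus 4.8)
The plan is to combine the sparse domination from Theorem \ref{mainThm} with the weighted square-function and strong-maximal-function bounds from Lemma \ref{iteration}, by running the standard ``sparse form implies weighted norm'' duality argument but with square functions inside the averages. First I would fix $w \in A_2$, take $f \in L^2(w)$ and a test function $g \in L^2(w^{-1})$ with $\|g\|_{L^2(w^{-1})} \le 1$ (both with finitely many Haar coefficients, by density). By Theorem \ref{mainThm} there is a collection $\mathcal S$, sparse in the disjoint-pieces sense with parameter $\eta = 1-\beta$, so that
\begin{equation*}
|\langle Tf, g\rangle| \lesssim C_\epsilon \sum_{R \in \mathcal S} |R| (Sf)_R (Sg)_R.
\end{equation*}
The goal is to bound the right-hand side by $[w]_{A_2}^{8}\|f\|_{L^2(w)}$.

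The key maneuver is to insert the weight: write $(Sf)_R = \frac{1}{|R|}\int_R Sf \cdot w^{1/2} w^{-1/2}$ and similarly $(Sg)_R$ against $w^{-1/2} w^{1/2}$, but it is cleaner to pass directly to weighted averages. Using the disjoint sets $E_R \subset R$ with $|E_R| \ge \eta|R|$, one has $|R| \lesssim |E_R|$, and then
\begin{equation*}
\sum_{R \in \mathcal S} |R| (Sf)_R (Sg)_R \lesssim \sum_{R \in \mathcal S} |E_R| (Sf)_R (Sg)_R \le \sum_{R \in \mathcal S} \int_{E_R} M(Sf)(x)\, M(Sg)(x)\, dx,
\end{equation*}
where $M$ is the strong maximal function, since for $x \in E_R \subset R$ we have $(Sf)_R \le M(Sf)(x)$ and likewise for $Sg$. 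Because the $E_R$ are pairwise disjoint, the sum of integrals is at most $\int_{\R^2} M(Sf)\,M(Sg)\,dx$, and now Cauchy--Schwarz with respect to $w$ gives
\begin{equation*}
\int M(Sf)\,M(Sg)\,dx \le \|M(Sf)\|_{L^2(w)} \, \|M(Sg)\|_{L^2(w^{-1})}.
\end{equation*}
Since $w \in A_2$ iff $w^{-1} \in A_2$ with $[w^{-1}]_{A_2} = [w]_{A_2}$, applying Lemma \ref{iteration} twice (first $M$, then $S$) to each factor yields $\|M(Sf)\|_{L^2(w)} \lesssim [w]_{A_2}^2 \|Sf\|_{L^2(w)} \lesssim [w]_{A_2}^4 \|f\|_{L^2(w)}$, and $\|M(Sg)\|_{L^2(w^{-1})} \lesssim [w]_{A_2}^4 \|g\|_{L^2(w^{-1})} \le [w]_{A_2}^4$. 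Multiplying gives $[w]_{A_2}^{8}$, and taking the supremum over $g$ finishes the proof by duality.

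The main obstacle — and the reason the exponent is $8$ rather than something sharp — is precisely the appearance of the strong maximal function $M$ in the step $(Sf)_R \le M(Sf)(x)$ for $x \in E_R$: unlike in the one-parameter case where the sparse cubes come with a martingale/Carleson structure that lets one use a dyadic maximal function adapted to the stopping collection, here the $E_R$ are genuine rectangles and the only pointwise estimate available passes through $M$, which costs $[w]_{A_2}^2$ (itself the iteration of two Hardy--Littlewood bounds), applied on top of the square-function bound which costs another $[w]_{A_2}^2$ per direction. One should double-check that $M(Sf)$ is genuinely in $L^2(w)$ — this is where finiteness of the Haar expansions of $f,g$ is used to avoid any convergence subtlety — and that the constant in Theorem \ref{mainThm} is independent of $w$, which it is. A minor point to verify is that one may choose $\beta \in (1/2,1)$ fixed once and for all, so the sparseness parameter $\eta$ and hence the implicit constant do not degrade.
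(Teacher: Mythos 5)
Your proposal is correct and follows essentially the same route as the paper: apply the sparse form bound of Theorem \ref{mainThm}, pass from $|R|(Sf)_R(Sg)_R$ to $\int_{E_R} M(Sf)M(Sg)$ using disjointness of the $E_R$, insert $w^{1/2}\sigma^{1/2}=1$ with $\sigma=w^{-1}$, apply Cauchy--Schwarz, and then use Lemma \ref{iteration} twice on each factor to collect $[w]_{A_2}^{8}$. Your closing remarks on why the exponent is $8$ also match the paper's own discussion following the proof.
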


\begin{proof} The estimate follows from sparse domination. Let $\sigma = w^{-1}$. By duality it is enough to show that for all $g \in L^{2}(\sigma)$ we have $$|\langle Tf, g \rangle| \lesssim C_{\epsilon} [w]_{A_2}^{8}\|f\|_{L^{2}(w)}\|g\|_{L^{2}(\sigma)}.$$ We know from above that there is a sparse collection of rectangles $\mathcal{S}$ so that $$|\langle Tf, g\rangle| \lesssim C_{\epsilon}\sum_{R \in \mathcal{S}} |R|(Sf)_R (Sg)_R.$$ 
	
	\noindent We now a repeat a version of the standard argument from the one-parameter theory, using the strong maximal function in place of the Hardy-Littlewood maximal function. We have \begin{align*} \sum_{R \in \mathcal{S}} |R|(Sf)_R (Sg)_R &\lesssim \sum_{R \in \mathcal{S}}|E_R|  (\inf_{x\in R} M(Sf)(x) ) (\inf_{x\in R} M (Sg)(x)) \\ &\lesssim \sum_{R \in \mathcal{S}} \int_{E_R} M(Sf)(x) M(Sg)(x) w^{1/2}(x) \sigma^{1/2}(x) dx \\ &\lesssim \int_{\R^2} M(Sf)(x) M(Sg)(x) w^{1/2}(x) \sigma^{1/2}(x) dx \\ &\lesssim \|M(Sf)\|_{L^{2}(w)}\|M(Sg)\|_{L^{2}(\sigma)} \\ &\lesssim [w]_{A_2}^{4} \|Sf\|_{L^{2}(w)}\|Sg\|_{L^{2}(\sigma)} \\ &\lesssim [w]_{A_2}^{8} \|f\|_{L^{2}(w)}\|g\|_{L^{2}(\sigma)}, \end{align*} as desired.  
	
\end{proof}

\noindent By passing through a square function, it is not too hard to show that the bi-parameter martingale transform satisfies the $A_{2}$ bound $$\|Tf\|_{L^{2}(w)} \lesssim [w]_{A_{2}}^{3} \|f\|_{L^{2}(w)},$$ hence the constants in Theorem \ref{martingaleTransf} are far from optimal. We do not know if the power of 8 appearing in Theorem \ref{martingaleTransf} can be pushed down further using our methods. In the one-parameter setting, the usual argument that produces the sharp $A_2$ bound invokes the weighted maximal operator \begin{equation}\label{weightedMax} M_1^{\mu}f(x) = \sup_{x \in I} \frac{1}{\mu(I)} \int_{I} |f(y)| \mu(y) dy,\end{equation} which is bounded on $L^{2}(\mu)$ for any positive function $\mu$, with norm independent of $\mu$ (this follows from the Besicovitch covering lemma or martingale theory, see \cite{S} for example). However, the bi-parameter analogue of \eqref{weightedMax} is in general \textit{not} bounded on $L^{2}(\mu)$, due to the more complicated geometry. R. Fefferman proved in \cite{Fef} that $\mu \in A_{\infty}(\R\times \R)$ is sufficient for the strong weighted maximal function $M^{\mu}$ to be bounded on $L^{2}(\mu)$, but the sharp dependence of the operator norm on $[\mu]_{A_{\infty}}$ is unclear from his argument. It is somewhat surprising that if we trace the dependence in his argument and use some recent sharp results related to $A_{\infty}$ (\cite{HP}, \cite{HPR}), we uncover a dependence that is \textit{exponential} in the $A_{\infty}$ characteristic. Recall that $w$ is in the bi-parameter weight class $A_{\infty}(\R \times \R)$ if $w$ is in the one-parameter class $A_{\infty}(\R)$ uniformly in each variable. 

\begin{prop} \label{strongWeighted} Suppose $w \in A_{p}(\R \times \R)$ and let $M^{w}$ be the two-dimensional weighted strong maximal function $$ M^{w}(f)(y) = \sup_{y \in R} \frac{1}{w(R)}\int_{R} |f(x)| \ w(x) dx.$$ Then for all $1 < p  < \infty$ we have $\|M^{w}\|_{L^{p}(w) \rightarrow L^{p, \infty}(w) } \lesssim_{p} [w]_{A_p}e^{c[w]_{A_{\infty}}}.$ \end{prop}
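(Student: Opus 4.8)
The plan is to follow R. Fefferman's iteration scheme from \cite{Fef}, which reduces the two-parameter weighted strong maximal function to a composition of one-parameter weighted maximal functions, but to track all constants carefully using recent sharp $A_\infty$ results. First I would fix notation: let $M_1^{w}$ and $M_2^{w}$ denote the one-parameter weighted maximal operators acting in the $x_1$ and $x_2$ directions respectively (so $M_i^{w} h(y) = \sup_{y \in I} \frac{1}{w(I)} \int_I |h| w$, with the supremum over intervals $I$ in the $i$-th coordinate through the relevant slice). The key pointwise domination is $M^{w} f \leq M_1^{w}(M_2^{w} f)$, where the iterated operator is understood with $w$ held as a fixed measure; this is immediate from the definition since any rectangle $R = I_1 \times I_2$ containing $y$ satisfies $\frac{1}{w(R)}\int_R |f| w \leq M_1^w\big( x_1 \mapsto M_2^w f(x_1, \cdot) \big)(y)$ after averaging the inner $x_2$-integral first.

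Next I would recall the two one-parameter inputs. The unweighted-type bound for $M_i^{\mu}$ on $L^p(\mu)$ holds with a constant depending only on $p$ (Besicovitch/martingale argument, \cite{S}); but since here the measure $w$ is \emph{not} a doubling measure in the composed variable — the slices $w(\cdot, x_2)$ are one-parameter $A_\infty$ but the full $w$ need not be doubling on rectangles — one cannot simply cite the universal bound for the composition. Instead, for the \emph{inner} operator $M_2^w$ I would use that $w(x_1, \cdot) \in A_\infty(\R)$ uniformly in $x_1$, so $M_2^w$ is bounded on $L^p(w\,dx_2)$ for a.e.\ fixed $x_1$ with constant $\lesssim_p 1$; integrating in $x_1$ gives $\|M_2^w f\|_{L^p(w)} \lesssim_p \|f\|_{L^p(w)}$. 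The subtle step is the \emph{outer} operator: $M_1^w$ must be applied to the function $F(x_1,x_2) = M_2^w f(x_1,x_2)$, and we need $M_1^w$ bounded from $L^p(w)$ to $L^{p,\infty}(w)$. Here the one-parameter weak-type bound for the weighted maximal operator $M_1^{w(\cdot, x_2)}$ holds with constant $\lesssim_p 1$ for each fixed $x_2$; but the operator in Fefferman's argument is not $M_1^{w(\cdot,x_2)}$ slice-by-slice — it is a genuinely two-dimensional beast because the rectangles $I_1 \times I_2$ are \emph{thick} in the $x_2$ direction. This is where the $A_\infty$ characteristic enters: one must compare $w(I_1 \times I_2)$ to the slice measures, and the discrepancy is controlled by a reverse-Hölder / $A_\infty$ estimate, whose quantitative form (via \cite{HP}, \cite{HPR}) introduces the factor $e^{c[w]_{A_\infty}}$.

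Concretely, the main step I would carry out is: given a level $\lambda$, let $E_\lambda = \{M^w f > \lambda\}$; by the Vitali-type covering lemma for rectangles (valid because each one-parameter family has the Besicovitch property) extract rectangles $\{R_k\}$ with $\sum_k w(R_k) \lesssim_p w(E_\lambda)$ and $\frac{1}{w(R_k)}\int_{R_k} |f| w > \lambda$. The covering produces overlapping rectangles, and to pass from $\sum w(R_k)$-control to a genuine bound one invokes R. Fefferman's key lemma: for an $A_\infty$ weight $w$ on $\R \times \R$ one can select a subcollection whose union captures a fixed proportion of $w(E_\lambda)$ and whose overlap function has exponentially small super-level sets, the exponential rate being governed by $[w]_{A_\infty}$. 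Tracking the constant in Fefferman's lemma — which rests on the reverse Hölder inequality for $w$ in each variable and a geometric iteration — and inserting the sharp reverse-Hölder exponent $1 + c/[w]_{A_\infty}$ from \cite{HP, HPR}, one arrives at the weak-type bound with the stated factor $[w]_{A_p} e^{c[w]_{A_\infty}}$ (the polynomial $[w]_{A_p}$ piece comes from the initial $L^p$-to-weak-$L^p$ step for the one-parameter pieces, and the exponential from the geometric covering iteration).

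The main obstacle I anticipate is precisely the bookkeeping in Fefferman's covering lemma: his original argument is not written with quantitative dependence in mind, and one must re-examine each application of the reverse Hölder inequality and each geometric summation to see that the losses compound multiplicatively at each of finitely many stages but that the \emph{number} of stages is itself controlled by $[w]_{A_\infty}$ — it is the product of these stage-wise losses that produces an exponential rather than polynomial dependence. Verifying that no further degradation occurs (e.g.\ that the iteration terminates after $O([w]_{A_\infty})$ steps rather than more) is the delicate point; everything else is a careful but routine assembly of known one-parameter sharp weighted bounds.
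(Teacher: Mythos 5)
Your plan follows essentially the same route as the paper's appendix: reduce the weak-type bound to the C\'{o}rdoba--Fefferman covering lemma, run the selection algorithm with parameter $1-\epsilon$, invoke the weighted Solyanik estimates of Hagelstein--Parissis (built on the sharp reverse H\"{o}lder inequality) for the measure-capture condition, and use the universal one-dimensional weighted maximal bound together with the $A_p$ condition on slices for the overlap condition. One clarification on the point you flagged as delicate: there is no iteration over $O([w]_{A_\infty})$ stages whose losses compound --- the exponential arises in a single pass, because the Solyanik estimate $w(\bigcup R_j)\leq (1+c\,\epsilon^{1/(c[w]_{A_\infty})})\,w(\bigcup \widetilde{R}_k)$ forces one to take the selection parameter $\epsilon$ smaller than $e^{-c[w]_{A_\infty}}$ to keep that factor uniformly bounded, while the slicing/overlap argument yields the constant $\epsilon^{-p}[w]_{A_p}$, which then becomes $[w]_{A_p}e^{cp[w]_{A_\infty}}$.
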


\noindent We prove this proposition in the appendix. 

The sparse bounds from Section 3 also allow us to derive weighted estimates for dyadic shifts and paraproduct-free Journ\'{e} operators. The argument is almost the same as the proof of Theorem \ref{martingaleTransf}, but in this case we have to work with weighted estimates for the shifted square functions $S^{i,j}$. We know from \cite{HPW} that if $w \in A_{p}(\R \times \R)$ there is some $c_{w} > 0$ such that $$\|S^{i,j}f\|_{L^{p}(w)} \leq 2^{(i+j)/2}c_w \|f\|_{L^{p}(w)}$$ for all $f \in L^{p}(w),$ but we would like to track the dependence of $c_w$ on $[w]_{A_p}$. This is the content of the next section.

\section{Weighted Estimates for the Shifted Square Function}

It was proved in \cite{HLW} that the one-parameter shifted square function $$S_{1}^{i,j}f(x)^{2} = \sum_{R \in \mathcal{D}} \bigg( \sum_{P \in (R)_i} |\hat{f}(P)| \bigg)^{2} \sum_{Q \in(R)_j}\frac{1_Q(x)}{|Q|}$$ satisfies the weighted estimate $\|S_{1}^{i,j}f\|_{L^{2}(w)} \leq  2^{(i+j)/2}C_{w}\|f\|_{L^{2}(w)}$ for $w \in A_{2}$. In particular, the argument in \cite{HLW} gives $C_{w} \leq [w]_{A_{2}}^{2}.$ In this section we prove a type of sparse bound for $S^{i,j}$ that allows us to show $C_w \leq [w]_{A_2}^{1/2}[w]_{A_{\infty}}^{1/2}$. An iteration argument then shows that the bi-parameter analogue of $S_{1}^{i,j}$ satisfies a weighted bound with constant $c_{w} \lesssim [w]_{A_2}^{4}[w]_{A_{\infty}}$. The method of proof is an adaptation of the scalar case of the argument by Hyt\"{o}nen, Petermichl, and Volberg in \cite{HPV}.  
 
 \subsection{Preliminary Results}
 
Fix an arbitrary (one-paramter) dyadic lattice $\mathcal{D}$. 
 
\begin{lemma}\label{sublinear} Suppose $f_k$ is a sequence of functions such that $S_{1}^{i,j}(f_k)$ is defined for each $k$. Then $S_1^{i,j}(\sum_{k}f_k) \leq \sum_{k}S^{i,j}(f_k)$. \end{lemma}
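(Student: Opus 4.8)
The plan is to reduce the statement to two elementary facts: the triangle inequality for Haar coefficients and Minkowski's inequality for an $\ell^2$-type expression with nonnegative weights. First I would use that the Haar coefficient map is linear, so that $\widehat{\sum_k f_k}(P) = \sum_k \hat{f_k}(P)$ and hence, for every dyadic interval $P$,
$$\Big| \widehat{\textstyle\sum_k f_k}(P) \Big| \le \sum_k |\hat{f_k}(P)|.$$
Since the weights $\sum_{Q \in (R)_j} \frac{1_Q(x)}{|Q|}$ in the definition of $S_1^{i,j}$ are nonnegative and $t \mapsto t^2$ is nondecreasing on $[0,\infty)$, this coefficientwise bound gives, for each $x$,
$$S_1^{i,j}\Big(\textstyle\sum_k f_k\Big)(x)^2 \le \sum_{R \in \mathcal{D}} \bigg( \sum_{k} \sum_{P \in (R)_i} |\hat{f_k}(P)| \bigg)^2 \sum_{Q \in (R)_j}\frac{1_Q(x)}{|Q|}.$$

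Next I would recognize the right-hand side as the square of an $\ell^2$ norm of a coordinatewise sum of nonnegative vectors. Fix $x$ and, for each $k$, let $v_k = v_k(x)$ be the vector indexed by $R \in \mathcal{D}$ with entries
$$v_k(R) = \bigg( \sum_{P \in (R)_i} |\hat{f_k}(P)| \bigg) \bigg( \sum_{Q \in (R)_j} \frac{1_Q(x)}{|Q|} \bigg)^{1/2} \ge 0.$$
Then $\|v_k\|_{\ell^2} = S_1^{i,j}(f_k)(x)$, while the displayed inequality above says precisely $S_1^{i,j}(\sum_k f_k)(x) \le \big\| \sum_k v_k \big\|_{\ell^2}$. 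Minkowski's inequality in $\ell^2$ then yields
$$S_1^{i,j}\Big(\textstyle\sum_k f_k\Big)(x) \le \Big\| \textstyle\sum_k v_k \Big\|_{\ell^2} \le \sum_k \|v_k\|_{\ell^2} = \sum_k S_1^{i,j}(f_k)(x),$$
which is the claim (here the sum on the right of the lemma is the one-parameter shifted square function $S_1^{i,j}$).

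For an infinite sequence $(f_k)$ I would first apply the estimate to the partial sums $\sum_{k \le n} f_k$ and then let $n \to \infty$; since all terms are nonnegative the right-hand side increases monotonically to $\sum_k S_1^{i,j}(f_k)(x)$, and under the hypothesis that $S_1^{i,j}(\sum_k f_k)$ is defined the left-hand side converges to it, so the inequality passes to the limit. I do not expect any serious obstacle here: the only point requiring a little care is to organize the $\ell^2$ structure so that the coordinatewise triangle inequality for the Haar coefficients and Minkowski's inequality combine cleanly; the rest is bookkeeping.
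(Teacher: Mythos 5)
Your proof is correct and is essentially the paper's argument: the coefficientwise triangle inequality for the Haar coefficients followed by Minkowski's inequality in an $\ell^2$ space over the index set of dyadic intervals $R$ (the paper phrases this as a weighted $\ell^2(1_R(x)/|R|)$ norm after collapsing $\sum_{Q\in(R)_j}1_Q/|Q|$ to $2^j 1_R/|R|$, whereas you fold the weight into the vector entries, which is the same thing). Your limiting argument for infinite sums is a small point the paper elides, and it is fine.
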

 
\begin{proof} Fix an arbitrary $x \in \R$. The lemma is a simple consequence of Minkowski's inequality for the weighted space $\ell^{2}(1_{R}(x)/|R|)$, where $\|\{\alpha_R\}\|_{\ell^{2}(1_{R}/|R|)}^{2} = \sum_{R} \alpha_{R}^{2} \frac{1_R}{|R|}$. Let $F_{k, (R)_i} = \sum_{P \in (R)_{i}}|\hat{f_k}(P)|. $ Then \begin{align*} S_1^{i,j}(\sum_{k} f_{k}) &= 2^{j/2}\| \sum_{k} F_{k, (R)_i} \|_{\ell^{2}(\frac{1_R}{|R|})} \\ &\leq 2^{j/2} \sum_{k}  \| F_{k, (R)_i} \|_{\ell^{2}(\frac{1_R}{|R|})} \\ &= 2^{j/2} \sum_{k} \left( \sum_{R} (F_{k, (R)_{i}})^{2} \frac{1_R}{|R|} \right)^{1/2} = \sum_{k}S_1^{i,j}(f_k).    \end{align*}
 \end{proof}

\begin{prop}\label{weak11} The operator $S_1^{i,j}$ maps $L^{1}(\R)$ into $L^{1,\infty}(\R)$ with $\|S_1^{i,j}\|_{L^{1} \rightarrow L^{1,\infty}} \lesssim 2^{(i+j)/2}.$ 
\end{prop}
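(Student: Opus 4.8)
The plan is to prove the weak $(1,1)$ bound for $S_1^{i,j}$ via a Calder\'{o}n--Zygmund decomposition, exactly as one does for the classical (unshifted) dyadic square function, keeping careful track of how the shift parameters $i,j$ enter the constants. Fix $f\in L^1(\R)$ and a height $\lambda>0$, and perform the dyadic Calder\'{o}n--Zygmund decomposition at level $\lambda$: write $f = g + b$ with $g$ the good part ($|g|\lesssim\lambda$ and $\|g\|_1\lesssim\|f\|_1$) and $b = \sum_Q b_Q$ the bad part, where the $b_Q$ are supported on disjoint maximal stopping cubes $Q$ with $\int b_Q = 0$, $\|b_Q\|_1\lesssim\lambda|Q|$, and $\sum_Q|Q|\lesssim\lambda^{-1}\|f\|_1$. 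By Lemma~\ref{sublinear}, $S_1^{i,j}f\le S_1^{i,j}g + \sum_Q S_1^{i,j}b_Q$, so it suffices to control the good part in $L^2$ and each bad piece away from a dilate of its supporting cube.

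For the good part, use the $L^2$ boundedness of $S_1^{i,j}$: one has $\|S_1^{i,j}g\|_2^2 = \sum_{R}\big(\sum_{P\in(R)_i}|\hat g(P)|\big)^2$, and by Cauchy--Schwarz in the sum over the $2^i$ children $P\in(R)_i$ this is $\le 2^i\sum_R\sum_{P\in(R)_i}|\hat g(P)|^2 = 2^i\sum_P|\hat g(P)|^2 = 2^i\|g\|_2^2$ (each $P$ has a unique $i$-th dyadic ancestor $R$). Hence $\|S_1^{i,j}g\|_2\lesssim 2^{i/2}\|g\|_2$, and since $\|g\|_2^2\lesssim\lambda\|f\|_1$ we get $|\{S_1^{i,j}g>\lambda/2\}|\lesssim \lambda^{-2}\|S_1^{i,j}g\|_2^2\lesssim 2^i\lambda^{-1}\|f\|_1$. (The factor $2^{i/2}$, not $2^{(i+j)/2}$, appears here because on $L^2$ the $j$-shift is an isometry in the relevant sense; the full $2^{(i+j)/2}$ will come from the bad part.)

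For the bad part, the key point is that $S_1^{i,j}b_Q$ is supported, up to the usual exceptional set, on a bounded dilate of $Q$ determined by the shift. Precisely, $b_Q$ has nonzero Haar coefficient $\hat b_Q(P)$ only for dyadic $P\subset Q$ (by cancellation and support), and in the definition of $S_1^{i,j}$ such a $P\in(R)_i$ forces $R$ to be one of finitely many cubes containing $Q$ at controlled scales; the outer sum $\sum_{Q'\in(R)_j}\tfrac{1_{Q'}}{|Q'|}$ is then supported in $R$, i.e. within $2^{\max(i,\cdot)}$ dilates of $Q$. More robustly: let $\Omega = \bigcup_Q 2^{K}Q$ for $K$ the relevant shift scale; then $|\Omega|\lesssim 2^{K}\sum|Q|\lesssim 2^K\lambda^{-1}\|f\|_1$ with $K$ linear in $\max(i,j)$ — this is the source of the $2^{(i+j)/2}$ and is where I expect to have to be slightly careful. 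Off $\Omega$, one estimates $\int_{\Omega^c}|S_1^{i,j}b_Q|$ directly using $\int b_Q=0$ and the size of the Haar coefficients of $b_Q$ (a short computation bounding $\sum_{P\subset Q}|\hat b_Q(P)|$ by $\lesssim 2^{i/2}\|b_Q\|_1 \lesssim 2^{i/2}\lambda|Q|$, times the $2^{j/2}$ from the normalization in the outer sum), then applies Chebyshev. Summing over $Q$ gives $|\{x\notin\Omega: \sum_Q S_1^{i,j}b_Q(x)>\lambda/2\}|\lesssim 2^{(i+j)/2}\lambda^{-1}\|f\|_1$, and combining with the good-part and $\Omega$ bounds yields $\|S_1^{i,j}\|_{L^1\to L^{1,\infty}}\lesssim 2^{(i+j)/2}$.

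The main obstacle is purely bookkeeping: pinning down exactly which dyadic cubes $R$ (and hence which outer cubes $Q'\in(R)_j$) can be "seen" by a bad atom $b_Q$, so that the exceptional set $\Omega$ has measure $\lesssim 2^{(i+j)/2}\lambda^{-1}\|f\|_1$ rather than something larger, and making sure the tail integral $\int_{\Omega^c}S_1^{i,j}b_Q$ genuinely carries only the right power of $2$. Everything else is the standard square-function CZ argument, and Lemma~\ref{sublinear} is exactly what licenses splitting $S_1^{i,j}(g+b)$ into pieces.
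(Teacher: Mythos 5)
There is a genuine gap, and it sits in the good part, not in the bad part where you anticipate the difficulty. First, your $L^2$ computation is off: from the definition, $\|S_1^{i,j}g\|_{L^2}^2 = 2^j\sum_R\big(\sum_{P\in(R)_i}|\hat g(P)|\big)^2$ (the factor $\sum_{Q\in(R)_j}\int 1_Q/|Q| = 2^j$ does not disappear), so Cauchy--Schwarz over the $2^i$ descendants gives $\|S_1^{i,j}g\|_2^2\le 2^{i+j}\|g\|_2^2$, and this is sharp. The deeper problem is structural: with a Calder\'{o}n--Zygmund decomposition at height $\lambda$, Chebyshev plus the $L^2$ bound yields $|\{S_1^{i,j}g>\lambda/2\}|\lesssim 2^{i+j}\lambda^{-2}\|g\|_2^2\lesssim 2^{i+j}\lambda^{-1}\|f\|_1$ --- the \emph{square} of the claimed constant --- and even your (too optimistic) $2^{i}$ exceeds $2^{(i+j)/2}$ whenever $i>j$. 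The paper's fix is precisely the missing idea: run the decomposition at the shifted height $\alpha\lambda$, carry $\alpha$ through all three terms (good part $\sim 2^{i+j}\alpha\lambda^{-1}$, exceptional set $\sim \alpha^{-1}\lambda^{-1}$, bad part $\sim 2^{i+j}\alpha\lambda^{-1}$), and optimize $\alpha=2^{-(i+j)/2}$ at the end. Without this, or some substitute that genuinely improves the good-part estimate, your scheme can only produce $2^{i+j}$.

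Your treatment of the bad part also loses too much. Taking $\Omega=\bigcup 2^{K}Q$ with $K$ comparable to $i$ costs $|\Omega|\lesssim 2^{i}\lambda^{-1}\|f\|_1$, which again beats $2^{(i+j)/2}$ when $i>j$. The paper keeps the fixed dilate $E=\bigcup 5J$, whose measure is $\lesssim(\alpha\lambda)^{-1}\|f\|_1$ independently of $i,j$; it observes that on $E^c$ only intervals $R\supsetneq J$ can contribute to $S_1^{i,j}b_J$, and then estimates $\int_{E^c}S_1^{i,j}(b)^2$ (an $L^2$ bound off the exceptional set, not an $L^1$ tail bound), using $\big(\sum_{P\in(R)_i}|\widehat{b_J}(P)|\big)^2\lesssim 2^{i}\|b_J\|_{L^1}^2/|R|$ together with the convergent series $\sum_{R\supset J}|J|/|R|\lesssim 1$. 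This sidesteps all the dilation bookkeeping you flag as the main obstacle. Your overall framework (CZ decomposition plus Lemma \ref{sublinear} to split $S_1^{i,j}(g+b)$) matches the paper, but as written the argument cannot reach the exponent $(i+j)/2$.
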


\begin{proof} The argument is a variation of the standard approach via the Calder\'{o}n-Zygmund decomposition. Fix $f \in L^{1}(\R)$ and $\lambda, \alpha > 0$. Choose maximal dyadic intervals $J$ such that $\frac{1}{|J|}\int_{J}|f| > \alpha\lambda$ and let $\Omega$ denote the union of such intervals. Then $f = g + b$, with $g = f1_{\Omega^{c}} + \sum_{J} (f)_{J}1_{J}$ and $b = \sum_{J}(f - (f)_{J})1_{J}$. Moreover $\|g\|_{L^{\infty}} \lesssim \alpha\lambda$ and $\|b_{J}\|_{L^{1}} \lesssim \alpha\lambda|J|.$

By Lemma \ref{sublinear} we have $$|\{S_1^{i,j}f > \lambda\}| \leq |\{S_1^{i,j}g > \lambda/2 \}| + |\{S_1^{i,j}b > \lambda/2 \}|.$$ Using the $L^{2}$-boundedness of $S_1^{i,j}$ we can immediately conclude that $$ |\{S_1^{i,j}g > \lambda/2 \}| \lesssim \lambda^{-2}2^{i+j}\|g\|^{2}_{L^{2}} \lesssim 2^{i+j}\frac{\alpha}{\lambda}\|f\|_{L^{1}}.$$ Let $E = \bigcup_{J} 5J$ and note $|E| \lesssim \alpha^{-1}\lambda^{-1}\|f\|_{L^{1}}$. We also claim that \begin{equation}\label{czStep}|\{ x \in E^{c} : S^{i,j}b(x) > \lambda/2 \} | \lesssim \frac{\alpha}{\lambda}2^{i+j}\|f\|_{L^{1}}.\end{equation}  We will show that \begin{equation} \label{L2cz} \int_{E^{c}} S_1^{i,j}(b)(x)^{2} dx \leq 2^{i+j}\alpha\lambda \sum_{J}\|b_J\|_{L^{1}}, \end{equation} which will be enough to prove \eqref{czStep} since it will imply \begin{align*}| \{ x \in E^{c} : S_1^{i,j}b(x) > \lambda/2 \}| &\lesssim \lambda^{-2} \int_{E^{c}} S_1^{i,j}(b)(x)^{2} dx \\&\lesssim \lambda^{-2} (2^{i+j}\alpha\lambda \sum_{J}\|b_{J}\|_{L^1}) \lesssim 2^{i+j} \frac{\alpha}{\lambda}\|f\|_{L^{1}}. \end{align*}

\noindent To prove \eqref{L2cz}, we apply Lemma \ref{sublinear} to get \begin{align} \nonumber\int_{E^{c}} S_1^{i,j}(b)(x)^{2} dx &\leq 2^{j} \int_{E^{c}} \sum_{J} \sum_{R \in \mathcal{D}} \left(\sum_{P \in (R)_i} |\widehat{b_J}(P)| \right)^{2} \frac{1_R (x)}{|R|}dx \\ \label{L2czEst1} &= 2^{j} \sum_{J}\int_{E^{c}}  \sum_{\substack{|R| > |J| \\ R \supset J }} \left(\sum_{P \in (R)_i} |\widehat{b_J}(P)| \right)^{2} \frac{1_R (x)}{|R|}dx.  \end{align} Notice that only the intervals $R$ with $R \supset J$ contribute to the sum, since $E^{c} = (\bigcup_{J} 5J)^{c}$ and if $J \cap R = \emptyset$ and $P \in (R)_i$ then $\widehat{b_J}(P) = 0$. Now \begin{align*} \left(\sum_{P \in (R)_i} |\widehat{b_J}(P)| \right)^{2} &\leq \left( \int_{\R} |b_{J}(x)| \cdot \sum_{P \in (R_i)} |h_P(x)|  dx \right)^{2} \\ &= \frac{2^{i}}{|R|} \left( \int_{\R} |b_{J}(x)| \sum_{P \in (R)_i} (|P|^{1/2}|h_{P}(x)|) dx \right)^{2},\end{align*} and since $\sum_{P \in (R)_i} |P|^{1/2}|h_{P}(x)|$ is bounded independent of $i$ (due to the disjointness of $P \in (R)_i$) it follows that \begin{equation} \label{haarEst} \left(\sum_{P \in (R)_i} |\widehat{b_J}(P)| \right)^{2} \lesssim \frac{2^{i}}{|R|} \left(\int_{\R} |b_J(x)| dx\right)^{2} \lesssim \|b_{J}\|_{L^{1}} 2^{i} \frac{\alpha\lambda|J|}{|R|}. \end{equation} Inserting \eqref{haarEst} into \eqref{L2czEst1} yields \begin{align*} \int_{E^{c}} S_1^{i,j}(b)(x)^{2} dx  &\lesssim 2^{i+j} \alpha\lambda \sum_{J} \|b_{J}\|_{L^{1}} \sum_{\substack{|R| > |J| \\ R \supset J}} \frac{|J|}{|R|} \\ &\lesssim 2^{i+j} \alpha\lambda \sum_{J} \|b_{J}\|_{L^{1}},  \end{align*} proving \eqref{L2cz}. In summary, we have shown $$|\{S_1^{i,j}f > \lambda \}| \lesssim \big(2^{i+j}\frac{\alpha}{\lambda} + \frac{1}{\alpha\lambda} + 2^{i+j}\frac{\alpha}{\lambda}\big)\|f\|_{L^{1}}$$ for arbitrary $\alpha > 0$. Setting $\alpha = 2^{-(i+j)/2}$ yields $$|\{S_1^{i,j}f > \lambda \}| \lesssim 2^{(i+j)/2} \frac{1}{\lambda} \|f\|_{L^{1}}, $$ completing the proof. 
\end{proof}

\subsection{The Sparse Bound }

The weak bound for $S_1^{i,j}$ allows us to mimic the sparse domination scheme from \cite{HPV}. A simple computation shows that \begin{equation}\label{shiftedNorm}\|S_1^{i,j}f\|^{2}_{L^{2}(w)} = 2^{j} \sum_{R \in \mathcal{D}} \bigg(\sum_{P \in (R)_{i}} |\hat{f}(P)|\bigg)^{2} (w)_{R}.\end{equation} We will estimate the term on the right by the norm of a sparse operator. We assume there are only finitely many Haar coefficients of $f$, so there is some large interval $J$ that contains every interval contributing to the sum in \eqref{shiftedNorm}. Fix large constants $C_1, C_2 > 0$ to be determined below, and begin by choosing maximal dyadic intervals $L$ such that either \begin{equation} \label{stopping1} \sum_{R\supset L} \bigg( \sum_{P \in (R)_i} |\hat{f}(P)| \bigg)^{2}\frac{2^{j}}{|R|}  > 2^{i+j}C_1 (|f|)^{2}_{J}\end{equation} or \begin{equation}\label{stopping2} (w)_{L} > C_2 (w)_{J}. \end{equation} Let $\mathcal{S}_1'$ denote the collection of maximal intervals from $\eqref{stopping1}$, and let $\mathcal{S}_{1}''$ denote the collection of maximal intervals from \eqref{stopping2}. The initial collections are $\mathcal{S}_0 = \{ J\}$ and $\mathcal{S}_1 = \mathcal{S}_1' \cup \mathcal{S}_1''$.  We have 

\begin{align*} 2^{j}\sum_{R} \bigg( \sum_{P \in (R)_i}|\hat{f}(P)|\bigg)^{2}(w)_{R} &= 2^{j}\sum_{\substack{R \text{ s.t. } \forall L \in \mathcal{S}_1 \\ R\nsubseteq L}} \bigg( \sum_{P \in (R)_i}|\hat{f}(P)|\bigg)^{2}(w)_{R} \  \\ & \ \ \ \ \ \ \ \ \ \ + 2^{j}\sum_{\substack{R \text{ s.t. } \exists L \in \mathcal{S}_1 \\ R \subset L}} \bigg( \sum_{P \in (R)_i}|\hat{f}(P)|\bigg)^{2}(w)_{R} \\ &:= A + B. \end{align*}

\noindent To estimate $A$ we use the stopping conditions \eqref{stopping1} and \eqref{stopping2}: \begin{align*} A &= 2^{j} \sum_{\substack{R \text{ s.t. } \forall L \in \mathcal{S}_1 \\ R\nsubseteq L}} \bigg( \sum_{P \in (R)_i}|\hat{f}(P)|\bigg)^{2}(w)_{R}\\  &\leq 2^{j} C_2\sum_{\substack{R \text{ s.t. } \forall L \in \mathcal{S}_1 \\ R\nsubseteq L}}  \bigg( \sum_{P \in (R)_i}|\hat{f}(P)|\bigg)^{2}(w)_{J} \\ &\leq  C_2\sum_{\substack{R \text{ s.t. } \forall L \in \mathcal{S}_1 \\ R\nsubseteq L}}  \bigg( \sum_{P \in (R)_i}|\hat{f}(P)| \bigg)^{2}2^{j}\frac{|J|}{|R|}(w)_{J}  \\ &\leq 2^{i+j}C_1C_2  |J| (|f|)_{J}^{2}(w)_{J}.
\end{align*} 

\noindent The term $B$ may be handled by recursion, by decomposing it as a sum of operators of type \eqref{shiftedNorm} localized to each $L$. The same selection process is used at each iteration, with the same constants $C_1, C_{2}$.  It remains to check that the stopping intervals actually form a sparse collection if we choose $C_1$ and $C_2$ correctly, and also that we can choose $C_1, C_2$ independent of $i,j$.  

We claim that all of the intervals $L$ chosen in \eqref{stopping1} are contained in $\{ (S_1^{i,j}f)^{2} >  2^{i+j}C_1 (|f|)^{2}_{J} \}.$ In fact, $$(S_1^{i,j}f(x))^{2}1_{L}(x) \geq 2^{j} \sum_{R \supset L} \bigg(\sum_{P \in (R)_{i}} |\hat{f}(P)|\bigg)^{2}\frac{1_{R}(x)1_{L}(x)}{|R|} > 2^{i+j}C_1(|f|)_{J}^{2}\cdot 1_{L}(x) $$ by selection, which proves the claim. It follows from Proposition \ref{weak11} that we can choose $C_{1} \sim 1$ such that $\sum_{L \in \mathcal{S}_{1}'} |L| \leq \frac{1}{4}|J|$. For the intervals chosen in \eqref{stopping2}, we directly estimate the following sum: 

$$C_2 \sum_{L \in \mathcal{S}_1''} |L| \leq  \sum_{L \in \mathcal{S}_1''} (w)_{J}^{-1} \int_{L}w(x) dx \leq |J|, $$ using the disjointness of the $L \in \mathcal{S}_{1}''$. Hence if $C_2 = 4$ then $\sum_{L \in \mathcal{S}_1''} |L| \leq \frac{1}{4}|J|$, and as a consequence $\sum_{L \in \mathcal{S}_1} |L| \leq \frac{1}{2} |J|$. Moreover, we can choose $C_1$ and $C_2$ independently of $i,j$. The same choice of $C_1, C_{2}$ at each iteration guarantees that the collection is sparse. We have proved the following:

\begin{prop}  Suppose $f$ has finitely many Haar coefficients and fix non-negative integers $i,j$. Then there exists a sparse collection $\mathcal{S}$ of dyadic intervals such that \begin{equation}\label{oneParamShiftSparse}\|S_1^{i,j}f\|^{2}_{L^{2}(w)} \lesssim 2^{i+j} \sum_{J \in \mathcal{S}}|J| (w)_{J}(|f|)_{J}^{2}.\end{equation}  The implicit constant is independent of $i,j$ and $f,w$. 
\end{prop}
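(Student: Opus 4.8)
The plan is to carry the stopping-time recursion sketched above to its conclusion, extracting one leading term per stopping interval. Starting from the identity \eqref{shiftedNorm}, I would first record the single-step decomposition already performed for $\mathcal{S}_0 = \{J\}$: writing $\mathcal{S}_1$ for the maximal dyadic $L \subseteq J$ satisfying \eqref{stopping1} or \eqref{stopping2}, one has $2^{j}\sum_R(\sum_{P\in(R)_i}|\hat f(P)|)^{2}(w)_R = A + B$, where the two stopping conditions give $A \lesssim 2^{i+j}C_1C_2\,|J|(|f|)_J^{2}(w)_J$ (this is exactly the computation displayed for the top interval), and $B = 2^{j}\sum_{R\,:\,R\subseteq L\text{ for some }L\in\mathcal{S}_1}(\sum_{P\in(R)_i}|\hat f(P)|)^{2}(w)_R$. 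To make $B$ amenable to recursion I would pass to the maximal elements of $\mathcal{S}_1'\cup\mathcal{S}_1''$, so that the selected intervals $L$ are pairwise disjoint; then, using $\hat f(P)=\widehat{f1_L}(P)$ for $P\subseteq L$, the quantity $2^{j}\sum_{R\subseteq L}(\ldots)^{2}(w)_R$ is a localized copy of the left side of \eqref{shiftedNorm}, with $J$ replaced by $L$ and the reference averages $(|f|)_J,(w)_J$ replaced by $(|f|)_L,(w)_L$. Iterating the same selection inside each $L$ with the same constants $C_1,C_2$ produces generations $\mathcal{S}_1,\mathcal{S}_2,\dots$; since $f$ has only finitely many Haar coefficients the process terminates.

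Setting $\mathcal{S}=\{J\}\cup\bigcup_n\mathcal{S}_n$ and summing the leading terms over all generations gives $\|S_1^{i,j}f\|_{L^{2}(w)}^{2}\lesssim 2^{i+j}C_1C_2\sum_{L\in\mathcal{S}}|L|(|f|)_L^{2}(w)_L$, which is \eqref{oneParamShiftSparse} provided $C_1,C_2\sim 1$ uniformly in $i,j$ and provided $\mathcal{S}$ is sparse. Both facts I would obtain from Proposition \ref{weak11} and a disjointness argument, as indicated above: each $\mathcal{S}'$-type child $L'$ of $L$ is contained in $\{S_1^{i,j}(f1_L)^{2}>2^{i+j}C_1(|f|)_L^{2}\}$, whose measure is $\lesssim 2^{(i+j)/2}\|f1_L\|_1/(2^{(i+j)/2}C_1^{1/2}(|f|)_L)=C_1^{-1/2}|L|$ by the weak bound $\|S_1^{i,j}\|_{L^{1}\to L^{1,\infty}}\lesssim 2^{(i+j)/2}$ — the factor $2^{i+j}$ built into \eqref{stopping1} cancelling the $2^{(i+j)/2}$ loss — so a fixed large $C_1$ gives $\sum_{L'}|L'|\le\tfrac14|L|$; while for the $\mathcal{S}''$-type children one only needs disjointness and $\sum_{L'}(w)_{L'}|L'|\le w(L)$, so $C_2=4$ gives $\sum_{L'}|L'|\le\tfrac14|L|$. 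Hence the children of each $L$ have total measure $\le\tfrac12|L|$, so $E_L=L\setminus\bigcup\{L'\in\mathcal{S}:L'\text{ a child of }L\}$ satisfies $|E_L|\ge\tfrac12|L|$ with the $E_L$ pairwise disjoint, giving sparseness in the disjoint-pieces sense (and a fortiori the Carleson packing condition).

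The routine parts I would not grind through are the $A$-estimate — every $R$ not contained in a child of $L$ fails \eqref{stopping1} and \eqref{stopping2}, so $(w)_R\le C_2(w)_L$ and $2^{j}\sum_R(\ldots)^{2}|R|^{-1}\le 2^{i+j}C_1(|f|)_L^{2}$, exactly as displayed for the top level — and the verification that the constant in Proposition \ref{weak11} is independent of $i,j$. The step I expect to require the most care is making the recursion precise: after passing to maximal stopping intervals one must check that $B$ decomposes into disjoint localized copies of the original problem with no rectangle double-counted, that the $f1_L$-identification is legitimate for applying the weak bound at each stage, and — most importantly — that the total constant obtained by summing the leading terms over all generations does not grow with the number of iterations. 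This last point is precisely where the disjointness of the selected intervals and the principle of charging one leading term to its own $L$ do the work, and it is the part of the argument that should be written out explicitly rather than invoked by analogy with the top level.
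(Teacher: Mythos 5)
Your proposal follows essentially the same route as the paper's proof: the same identity \eqref{shiftedNorm}, the same two stopping conditions, the same $A+B$ split with $A$ controlled by the stopping rules and $B$ handled by recursion, and the same sparsity verification (weak $(1,1)$ bound from Proposition \ref{weak11} for the $\mathcal{S}'$-type intervals, disjointness with $C_2=4$ for the $\mathcal{S}''$-type ones). The points you flag as needing care — the localization $\hat f(P)=\widehat{f1_L}(P)$ and the uniformity of $C_1,C_2$ across generations — are exactly the details the paper treats implicitly, and your treatment of them is correct.
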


\begin{cor} \label{squareShiftSharp}  Suppose $f \in L^{2}(w)$ with $w\in A_2$ and fix non-negative integers $i,j$. Then $$\|S_1^{i,j}f\|_{L^{2}(w)} \lesssim [w]_{A_2}^{1/2}[w]_{A_{\infty}}^{1/2}2^{(i+j)/2}\|f\|_{L^{2}(w)}.$$
\end{cor}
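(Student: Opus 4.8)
\emph{Plan.} The idea is to feed the sparse bound \eqref{oneParamShiftSparse} into a weighted Carleson embedding, exactly as in the scalar scheme of \cite{HPV}. Since the implicit constant in \eqref{oneParamShiftSparse} does not depend on $i,j,f,w$, it suffices to show that for an arbitrary sparse family $\mathcal{S}$ of dyadic intervals,
$$\sum_{J\in\mathcal{S}}|J|(w)_J(|f|)_J^2\;\lesssim\;[w]_{A_2}[w]_{A_\infty}\,\|f\|_{L^2(w)}^2;$$
indeed this gives $\|S_1^{i,j}f\|_{L^2(w)}^2\lesssim 2^{i+j}[w]_{A_2}[w]_{A_\infty}\|f\|_{L^2(w)}^2$, and taking square roots yields Corollary \ref{squareShiftSharp}. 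Write $\sigma=w^{-1}$ (so $\sigma\in A_2$ with $[\sigma]_{A_2}=[w]_{A_2}$) and $\langle g\rangle_J^\sigma=\sigma(J)^{-1}\int_J g\,\sigma$. The first step is an $A_2$ reduction interval by interval: from the identity $\int_J|f|=\int_J(|f|w)\,\sigma=\sigma(J)\langle |f|w\rangle_J^\sigma$ we get $(|f|)_J=\tfrac{\sigma(J)}{|J|}\langle |f|w\rangle_J^\sigma$, whence
$$|J|(w)_J(|f|)_J^2=\frac{w(J)\sigma(J)}{|J|^2}\,\sigma(J)\big(\langle |f|w\rangle_J^\sigma\big)^2\;\le\;[w]_{A_2}\,\sigma(J)\big(\langle |f|w\rangle_J^\sigma\big)^2,$$
so that $\sum_{J\in\mathcal{S}}|J|(w)_J(|f|)_J^2\le[w]_{A_2}\sum_{J\in\mathcal{S}}\sigma(J)\big(\langle |f|w\rangle_J^\sigma\big)^2$.

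The remaining sum is of weighted Carleson-embedding type with respect to the measure $\sigma\,dx$ and the coefficients $a_J=\sigma(J)$: once the packing estimate $\sum_{J\in\mathcal{S},\,J\subset J_0}\sigma(J)\le\Lambda\,\sigma(J_0)$ is verified for every dyadic $J_0$, the Carleson embedding theorem gives $\sum_{J\in\mathcal{S}}\sigma(J)(\langle |f|w\rangle_J^\sigma)^2\lesssim\Lambda\,\||f|w\|_{L^2(\sigma)}^2=\Lambda\,\|f\|_{L^2(w)}^2$, and the proof is complete provided $\Lambda\lesssim[w]_{A_\infty}$. To get this I would use that a sparse family comes with pairwise disjoint major subsets $E_J\subset J$, $|E_J|\ge\eta|J|$; since $\langle\sigma\rangle_J\le M(\sigma\mathbf 1_{J_0})(x)$ for every $x\in E_J\subset J\subset J_0$, we obtain $\sigma(J)=|J|\langle\sigma\rangle_J\le\eta^{-1}\int_{E_J}M(\sigma\mathbf 1_{J_0})$, and summing over the $J\subset J_0$, using disjointness of the $E_J$ together with the Fujii--Wilson bound $\int_{J_0}M(\sigma\mathbf 1_{J_0})\le[\sigma]_{A_\infty}\,\sigma(J_0)$, gives $\Lambda\le\eta^{-1}[\sigma]_{A_\infty}$. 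Assembling the three steps yields $\|S_1^{i,j}f\|_{L^2(w)}^2\lesssim 2^{i+j}[w]_{A_2}[\sigma]_{A_\infty}\|f\|_{L^2(w)}^2$, which is the asserted estimate once one accounts (as in \cite{HPV}, \cite{HLW}) for the normalizations relating the $A_\infty$ characteristics of $w$ and $\sigma$.

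\emph{Main obstacle.} With \eqref{oneParamShiftSparse} already available, no genuinely new idea is required: the remaining work is the standard chain ``sparse $+$ $A_\infty$ $\Rightarrow$ weighted Carleson packing $\Rightarrow$ Carleson embedding''. The delicate point is obtaining the \emph{first} power of the $A_\infty$ characteristic in the packing estimate for $\{\sigma(J)\}$ — this is precisely why one must work with the Fujii--Wilson (maximal-function) form of $A_\infty$, and with its sharp relatives, rather than the qualitative description $A_\infty=\bigcup_p A_p$, which would give only a $[w]_{A_2}$-power bound and thereby lose the mixed $A_2$--$A_\infty$ improvement; some care is also needed to keep track of whether the characteristic of $w$ or of $\sigma$ naturally appears and to reconcile it with the form of the statement. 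Finally, the sparse bound \eqref{oneParamShiftSparse} is stated for $f$ with finitely many Haar coefficients and with a constant independent of $f$ and $w$, so a routine density argument (Fatou's lemma, using continuity of the Haar coefficients on $L^2(w)$) extends the conclusion to all $f\in L^2(w)$.
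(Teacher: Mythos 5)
Your argument is correct, but it reaches the mixed bound by a different route than the paper. The paper substitutes $w^{-1/2}f$ into \eqref{oneParamShiftSparse}, applies H\"older with an exponent $r'=\big(2(1+\delta)\big)'$ together with the sharp reverse H\"older inequality for $w^{-1}$ to replace $(|f|w^{-1/2})_J^2$ by $(|f|^{r'})_J^{2/r'}(w^{-1})_J$, extracts $[w]_{A_2}$ from $(w)_J(w^{-1})_J$, and then lets the blow-up of the unweighted $L^{2/r'}$ norm of the maximal operator $M_{r'}$ supply the $A_\infty$ factor; this is the scalar shadow of the matrix-weight scheme in \cite{HPV}, which is presumably why the authors follow it. You instead rewrite $(|f|)_J$ as a $\sigma$-average, extract $[w]_{A_2}$ interval by interval, and obtain the $A_\infty$ factor from the Fujii--Wilson packing estimate $\sum_{J\subset J_0}\sigma(J)\le \eta^{-1}[\sigma]_{A_\infty}\sigma(J_0)$ fed into the weighted dyadic Carleson embedding theorem. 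Your version avoids reverse H\"older altogether and makes the origin of each characteristic transparent, at the cost of not generalizing to the matrix-weighted setting; both arguments land on exactly the same constant. On the bookkeeping you flag at the end: your proof produces $[w]_{A_2}[\sigma]_{A_\infty}$ for the squared norm, and $[\sigma]_{A_\infty}$ is \emph{not} comparable to $[w]_{A_\infty}$ in general; but the paper's own proof has the identical feature (its reverse H\"older step is applied to $w^{-(1+\delta)}$, so $\delta$ must really be taken $\sim [w^{-1}]_{A_\infty}^{-1}$), so you match the proof as it actually runs, and in either reading one still has $[\sigma]_{A_\infty}\le[\sigma]_{A_2}=[w]_{A_2}$, which is all that is used downstream.
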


\begin{proof} We use a special case of the general argument outlined in \cite{HPV}. Note that we have estimated $\|S_1^{i,j}f\|_{L^{2}(w)}$ by the (scalar version of the) same sparse object appearing in that paper.
	
By plugging $w^{-1/2}f$ into the sparse bound $\eqref{oneParamShiftSparse}$, we get $$\|S_1^{i,j} (w^{-1/2}f)\|^{2}_{L^{2}(w)} \lesssim 2^{i+j} \sum_{J \in \mathcal{S}}|J| (w)_{J}(|f|w^{-1/2})_{J}^{2}.$$ Hence it will be enough to show that the sum on the right above is no more than $C[w]_{A_2}[w]_{A_{\infty}}\|f\|^{2}_{L^{2}(\R)}.$ Let $\delta = \frac{1}{c[w]_{A_{\infty}}}$ and $r = 2(1 + \delta)$. By H\"{o}lder's inequality $$(|f| w^{-1/2})_{J}^{2} \leq (|f|^{r'})_{J}^{2/r'} (w^{-(1+\delta)} )^{\frac{1}{1+ \delta}}_{J},$$ hence the reverse H\"{o}lder inequality yields $$(|f| w^{-1/2})_{J}^{2} \lesssim (|f|^{r'})_{J}^{2/r'} (w^{-1})_{J}$$  (see \cite{HPR}). It follows that \begin{align*} \sum_{J \in \mathcal{S}}|J| (w)_{J}(|f|w^{-1/2})_{J}^{2} &\lesssim \sum_{J \in \mathcal{S}}|J| (|f|^{r'})_{J}^{2/r'} (w^{-1})_{J} (w)_{J} \\ &\lesssim [w]_{A_2} \sum_{J \in \mathcal{S}} |J| (|f|^{r'})_{J}^{2/r'} \\ &\lesssim [w]_{A_2} \int_{\R} M(|f|^{r'})_{J}^{2/r'}(x) dx, \end{align*} using the sparsity of the collection to get the integral over $\R$ in the last line. Note that $r' < 2$, hence $$\|M^{r'} (f) \|_{L^{2}(\R)}^{2} \lesssim ((2/r')')^{2/r'}\|f\|^{2}_{L^{2}(\R)}$$ with $(2/r')'$ the dual exponent to $2/r'$. Using the definition of $r$ we see that  $((2/r')')^{2/r'} \lesssim [w]_{A_{\infty}}$, and as a consequence we can conclude that $$ \sum_{J \in \mathcal{S}}|J| (w)_{J}(|f|w^{-1/2})_{J}^{2} \lesssim [w]_{A_{2}}[w]_{A_{\infty}} \|f\|^{2}_{L^{2}(\R)}$$ as desired.  

\end{proof}

\textbf{Remark.} It is now straightforward to prove weighted estimates for bi-parameter dyadic shifts and the type of Journ\'{e} operators considered in Corollary \ref{singIntSparse}, although these estimates are far from sharp. In particular, by using Corollary \ref{squareShiftSharp} and the iteration argument from Section 3 in \cite{HPW}, one can show that the bi-parameter shifted square function $S^{i,j}$ satisfies the $A_{2}$ bound $\|S^{i,j}f\|_{L^{2}(w)} \lesssim [w]^{4}_{A_{2}}[w]_{A_{\infty}}$ (the extra powers come from passing through a martingale transform). Then argue as in the proof of Theorem \ref{martingaleTransf}, with the $S^{i,j}$ replacing the simpler square functions $S$. Note that the dependence on $[w]_{A_2}$ would be improved if we could prove a sharper weighted estimate for $S^{i,j}$.

\section*{Appendix: The Weighted Strong Maximal Function}

Here we prove Proposition \ref{strongWeighted}. As in \cite{Fef}, the idea is to bootstrap the boundedness of the maximal function in dimension one with the help of the $A_{\infty}$ property of the weight. We follow R. Fefferman's argument and also use some recent `weighted Solyanik estimates' due to P. Hagelstein and I. Parissis \cite{HP}, which rely on the sharp reverse H\"{o}lder estimates in \cite{HPR}. 
	
Fix $1 < p < \infty$. By the covering-lemma argument in \cite{CF}, it is enough to show that if $R_1, R_2,...$ is a sequence of rectangles with sides parallel to the axes, then there is a subcollection $\{\widetilde{R}_{j}\}$ of $\{ R_j \}$ such that \begin{equation} \label{covLem1} \int_{\bigcup \widetilde{R}_j} w(x) dx \geq C_1 \int_{\bigcup R_j} w(x) dx \end{equation} and \begin{equation} \label{covLem2} \| \sum_{j} 1_{\widetilde{R}_j} \|_{L^{p'}(w)} \leq C_2 \bigg(\int_{\bigcup R_j} w(x) dx\bigg)^{1/p'},  \end{equation} where $p'$ is the dual exponent to $p$. In this case one has $$w(\{M^{w}f > \alpha \})^{1/p} \leq C_1^{-1} C_2 \frac{\|f\|_{L^{p}(w)}}{\alpha}, $$ so that $\|M^{w}\|_{L^{p}(w) \rightarrow L^{p,\infty}(w)} \leq C_1^{-1}C_2.$ 
	
By monotone convergence we may assume the initial sequence $\{R_j\}$ is finite. We choose the subcollection $\{ \widetilde{R}_{j} \}$ using the C\'{o}rdoba-Fefferman selection algorithm from \cite{CF}. Assume the rectangles $\{R_j\}$ have been ordered with decreasing sidelengths in the $x_2$ direction, and take $\widetilde{R}_1  = R_1$. Proceeding inductively, let $\widetilde{R}_j$ be the first $R_k$ occurring after $\widetilde{R}_{j-1}$ so that $$ |R_k \cap \bigcup_{l < k} \widetilde{R}^{\ast}_l | < (1-\epsilon) |R_k|,$$ where $\epsilon \in (0, e^{-c[w]_{A_\infty}} )$. Then arguing as in the proof of Cor. 5.3 from \cite{HP}, we conclude that \begin{align*}w(\bigcup_{j} R_{j}) &\leq (1 + c\epsilon^{(c[w]_{A_{\infty}})^{-1} } ) w(\bigcup_{k} \widetilde{R}_k) \\ &\leq A\cdot w(\bigcup_{k} \widetilde{R}_k), \end{align*} with $A$ independent of $w$ (we've used the assumed upper bound on $\epsilon$). Therefore we can take $C_1$ independent of $w$ in \eqref{covLem1}. 
	
Now take a point $\bar{x} = (\alpha, \beta)$ inside a rectangle $R_k$ which does not occur among the $\widetilde{R}_j$. Let $\{T_i\}$ denote the intervals obtained by slicing the two-dimensional rectangles $\{R_i\}$ with a line perpendicular to the $x_2$ axis at height given by $\beta$ (the $x_2$-coordinate of $\bar{x}$). Given any rectangle $R = I \times J$, write $R^{\ast} = I \times 3J$. We claim that for each such $R_i \sim T_i \times J_i$ we must have \begin{equation} \label{sliceId1} |T_i \cap \bigcup \widetilde{T}^{\ast}_j | \geq (1-\epsilon)|T_i|,\end{equation} with $\widetilde{T}^{\ast}_j$ the slices corresponding to $\widetilde{R}^{\ast}_j$. This follows from the assumption about decreasing sidelengths. In fact, we may assume all $\widetilde{T}_j^{\ast}$ appearing in the union correspond to $\widetilde{R}_j$ that intersect $R_i$ and were chosen before $R_i$ relative to the initial order (the full union is only larger). By the selection criterion $$|R_i \cap (\bigcup \widetilde{R}^{\ast}_j)| \geq (1-\epsilon)|R_i| = (1-\epsilon)|T_i||J_i|.$$ But the sidelengths of the $\widetilde{R}_j$ parallel to the $x_2$ axis are longer than $J_i$, and in particular their three-fold dilates contain $J_i$. It follows that $R_i \cap (\bigcup \widetilde{R}^{\ast}_j) = (T_i \cap \bigcup \widetilde{T}_{j}^{\ast} ) \times J_i. $ This implies \eqref{sliceId1}, since we must have $\frac{|R_i \cap (\bigcup \widetilde{R}^{\ast}_j)|}{|J_i|} = |T_i \cap \bigcup \widetilde{T}_{j}^{\ast} |. $
	
Next, observe that if $E_j = \widetilde{T}_j - \bigcup_{l < j} \widetilde{T}_l^{\ast}$ then by arguing as above (and using the selection criterion) we see that $|\widetilde{T}_j \cap \bigcup_{l < j} \widetilde{T}^{\ast}_l| \leq (1-\epsilon)|\widetilde{T}_j|$, and therefore $|E_j|/|\widetilde{T}_j| > \epsilon.$ Since $w \in A_{p}$ we can conclude that $$\epsilon^{p} \leq \left(\frac{|E_j|}{|\widetilde{T}_j|}\right)^{p} \leq [w]_{A_p} \frac{w(E_j)}{w(\widetilde{T}_j)} $$ uniformly in the free variable. Hence for any $f \in L^{p}(wdx_1)$ with $\|f\|_{p} \leq 1$ we have 
	
\begin{align*}\int \sum_{j} 1_{\widetilde{T}_j}(x_1)f(x_1) w(x) dx_1 &\leq  \epsilon^{-p} [w]_{A_p} \int_{E_j} w(x) dx_1 \cdot \frac{1}{\int_{ \widetilde{T}_j } w(x)dx_1 } \int_{\widetilde{T}_j } f(x_1) w(x) dx_1 \\ &\leq   \epsilon^{-p} [w]_{A_p} \int_{\bigcup \widetilde{T}_j} M^{w}_1 (f)(x_1) w(x)dx_1 \\ &\leq  \epsilon^{-p} [w]_{A_p}\|M^{w}_1 f \|_{L^{p}(w dx_1)} \left(\int_{\bigcup \widetilde{T}_j}w(x) dx_1 \right)^{1/p'} \\ & \lesssim  \epsilon^{-p} [w]_{A_p}\left(\int_{\bigcup \widetilde{T}_j}w(x) dx_1 \right)^{1/p'}, \end{align*} using the fact that the weighted one-dimensional maximal operator is bounded independent of $w$. We also used the disjointness of the sets $E_{j}$ to sum. After integrating in $x_2$ it follows that we can take $C_2 =  \epsilon^{-p} [w]_{A_p}$ in \eqref{covLem2}.  
	
Combining the above results yields $$\|M^w\|_{L^{p}(w) \rightarrow L^{p,\infty}(w)} \lesssim  \epsilon^{-p} [w]_{A_p}$$ for any $\epsilon \in (0, e^{-c[w]_{A_\infty}})$. Hence $\|M^w\|_{L^{p}(w) \rightarrow L^{p,\infty}(w)} \lesssim [w]_{A_p}e^{c[w]_{A_{\infty}}}$ as claimed.

We do not know if there is an alternative approach the the boundedness of $M^{w}$ that yields a smaller dependence on $[w]_{A_{\infty}}$. Note, however, that Fefferman's covering lemma is equivalent to the boundedness of $M^{w}$ on $L^{p}(w)$, up to constants (see \cite{CF}).

\Addresses

\end{document}